\documentclass[a4paper,reqno]{amsart}

\usepackage{geometry}
\title{The Zariski Cotangent Space at the Origin of the Pencil Diffeological Space}
\author{Masaki Taho}

\makeatletter
\@namedef{subjclassname@2020}{\textup{2020} Mathematics Subject Classification}
\makeatother
\subjclass[2020]{Primary~57P05; Secondary~58A40}

\keywords{diffeology, Zariski cotangent space, tangent space, germ algebra}

\address{GRADUATE SCHOOL OF MATHEMATICAL SCIENCES, THE
UNIVERSITY OF TOKYO, 3-8-1 KOMABA, MEGURO-KU, TOKYO, 153-8914,
JAPAN}
\email{taho@ms.u-tokyo.ac.jp}

\usepackage{amsmath}
\usepackage{amssymb}
\usepackage{amsfonts}
\usepackage{amsthm}
\usepackage{mathtools}
\usepackage{mathrsfs}
\usepackage{bm}
\usepackage{xcolor}
\usepackage[backref,pdfusetitle]{hyperref}
\hypersetup{
    pdfauthor={Masaki Taho},
    colorlinks=true,
    linkcolor=[rgb]{0.3,0.55,0.25},
    citecolor=[rgb]{0.164,0.39,0.77},
    urlcolor=[rgb]{0,0,1}
}
\usepackage[nameinlink,capitalize,noabbrev]{cleveref}
\usepackage{thmtools}
\usepackage{thm-restate}

\crefname{thm}{theorem}{theorems}
\Crefname{thm}{Theorem}{Theorems}
\crefname{prop}{proposition}{propositions}
\Crefname{prop}{Proposition}{Propositions}
\crefname{lemma}{lemma}{lemmas}
\Crefname{lemma}{Lemma}{Lemmas}
\crefname{cor}{corollary}{corollaries}
\Crefname{cor}{Corollary}{Corollaries}
\crefname{defi}{definition}{definitions}
\Crefname{defi}{Definition}{Definitions}
\crefname{ex}{example}{examples}
\Crefname{ex}{Example}{Examples}
\crefname{remark}{remark}{remarks}
\Crefname{remark}{Remark}{Remarks}
\crefname{equation}{equation}{equations}
\Crefname{equation}{Equation}{Equations}
\crefname{section}{section}{sections}
\Crefname{section}{Section}{Sections}

\newtheorem{thm}{Theorem}[section]
\newtheorem{prop}[thm]{Proposition}
\newtheorem{lemma}[thm]{Lemma}
\newtheorem{cor}[thm]{Corollary}

\newtheorem*{mthm*}{Main Theorem}
\newtheorem*{thm*}{Theorem}
\newtheorem*{prop*}{Proposition}
\theoremstyle{definition}
\newtheorem{defi}[thm]{Definition}
\newtheorem{ex}[thm]{Example}
\newtheorem{remark}[thm]{Remark}

\newcommand{\R}{\mathbb{R}}
\newcommand{\N}{\mathbb{N}}
\newcommand{\Z}{\mathbb{Z}}
\newcommand{\RP}{\mathbb{RP}}
\newcommand{\germ}{G}
\newcommand{\plotdom}[1]{U_{#1}}
\newcommand{\colim}{\operatorname*{colim}}
\newcommand{\righttangent}[2]{\hat{T}^R_{#1}(#2)}
\newcommand{\external}[2]{\hat{T}_{#1}(#2)}
\newcommand{\smooth}{C^\infty}
\newcommand{\supp}{\operatorname{supp}}
\newcommand{\id}{\operatorname{id}}
\newcommand{\Odd}{\operatorname{Odd}}

\newcommand{\Hom}{\operatorname{Hom}}
\newcommand{\ev}{\operatorname{ev}}

\begin{document}

\begin{abstract}
The pencil diffeological space is a plane whose smooth structure at the origin is detected only along lines through it.
We show that the Zariski cotangent space at the origin is entirely determined by the derivatives along these lines.
Moreover, the derivatives along different lines form precisely a $\sigma$-continuous family, a weak form of continuity.
We also compute the external and right tangent spaces at the origin from this description.
This suggests that the local smooth structure at a singular point of a diffeological space can give rise to unexpected topological notions.
\end{abstract}

\maketitle

\section{Introduction}

Diffeologies were introduced by Souriau in his paper
\emph{Groupes diff\'erentiels} \cite{Souriau} and subsequently expanded upon
in the preprint by Donato and Iglesias, \emph{Exemple de groupes
diff\'erentiels: flots irrationnels sur le tore} \cite{Donato-Iglesias}.
For a general reference, see Iglesias-Zemmour \cite{IZ}.
For a based diffeological space \((X,x)\), the Zariski cotangent space \(I_x(X)/I_x^2(X)\) records the first-order structure at \(x\), where \(I_x(X)\subset\germ(X,x)\) denotes the ideal of smooth germs vanishing at \(x\).

The main object of this paper is the \emph{pencil diffeological space} \(P\), the plane \(\R^2\) whose smooth structure at the origin is detected only along lines through it.
Any smooth map from a Euclidean space into \(P\) whose image passes through \(0\) locally lies in a single line through \(0\).
Consequently, each smooth germ in \(I_0(P)\) has an ordinary derivative along every line \(L\in\RP^1\) through \(0\).
The basic question is what compatibility these linewise derivatives satisfy.

Write \(E_0(P)=I_0(P)/I_0(P)^2\). For \(f\in I_0(P)\) and
\(L\in\RP^1\), let
\[
  \tilde J(f)_L=d_0(f|_L)\in L^*
\]
denote the linewise derivative of \(f\) along \(L\).
The first main result is the following.

\begin{restatable*}{thm}{mainjtheorem}\label{thm:main-J}
The map \(\tilde J\colon I_0(P)\to\prod_{L\in\RP^1}L^*\) is a smooth linear map.
Its kernel satisfies
\[
  \ker \tilde J=I_0(P)^2.
\]
Hence it descends to a smooth injective linear map
\[
  J\colon E_0(P)\hookrightarrow
  \prod_{L\in\RP^1}L^*,
  \qquad
  [F]\mapsto
  \bigl(d_0(F|_L)\bigr)_{L\in\RP^1}.
\]
\end{restatable*}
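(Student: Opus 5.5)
The plan is to prove the easy parts first: linearity, smoothness, and the inclusion \(I_0(P)^2\subset\ker\tilde J\). After that comes the real content, \(\ker\tilde J\subset I_0(P)^2\), which I would prove by an explicit decomposition \(f=x\cdot a+y\cdot b\) with \(a,b\in I_0(P)\).

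Throughout I use the description of smooth germs on \(P\) at \(0\) that follows from the introduction. A germ \(f\) is smooth if and only if it is smooth on a punctured neighbourhood in the usual sense and each restriction \(f|_L\) is smooth near \(0\). Indeed, a plot through \(0\) locally factors through a single line \(L\), so \(f\circ p=(f|_L)\circ p\) there. Away from the preimage of the origin, plots are ordinary smooth maps into \(\R^2\setminus\{0\}\).

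Linearity of \(\tilde J\) is clear, since \(f\mapsto f|_L\mapsto d_0(f|_L)\) is linear. For smoothness I would check each coordinate of the product diffeology. Given a plot \(u\mapsto f_u\) of \(I_0(P)\), the induced family \((u,t)\mapsto f_u(tv_L)\) is smooth, where \(v_L\) spans \(L\). Then \(u\mapsto \partial_t f_u(tv_L)|_{t=0}\) is smooth, which is what the product diffeology on \(\prod_L L^*\) requires. For \(I_0(P)^2\subset\ker\tilde J\): if \(f=\sum g_ih_i\) with \(g_i,h_i\in I_0(P)\), then on each line the one-variable product rule gives \(d_0((g_ih_i)|_L)=g_i(0)\,d_0(h_i|_L)+h_i(0)\,d_0(g_i|_L)=0\).

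For the reverse inclusion, let \(f\in\ker\tilde J\) and put \(r^2=x^2+y^2\). Define
\[
  a=\frac{x f}{r^2},\qquad b=\frac{y f}{r^2}\quad\text{on }\R^2\setminus\{0\},\qquad a(0)=b(0)=0 .
\]
Then \(f=x\cdot a+y\cdot b\) off the origin and trivially at the origin. Also \(x,y\in I_0(P)\), since they are smooth on \(\R^2\) and therefore on \(P\). It remains to show that \(a,b\) are smooth germs on \(P\).

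Off the origin this is immediate. On a line \(L\) spanned by a unit vector \(v=(v_1,v_2)\), the function \(\phi(t)=f(tv)\) is smooth and satisfies \(\phi(0)=0\) and \(\phi'(0)=0\), because \(\tilde J(f)_L=0\). Hadamard's lemma applied twice gives \(\phi(t)=t^2k(t)\) with \(k\) smooth. Hence \(a(tv)=v_1 t\,k(t)\), which is smooth in \(t\) and vanishes at \(0\), and similarly for \(b\). Thus \(a,b\in I_0(P)\) and \(f\in I_0(P)^2\). The factorization through \(J\) on \(E_0(P)\) then follows formally: \(J\) is injective because \(\ker\tilde J=I_0(P)^2\), and it is smooth because \(E_0(P)\) carries the quotient diffeology.

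The step I expect to need the most care is the characterization of smooth germs on \(P\) used above, especially at points near but not equal to \(0\). A plot through \(0\) lies in one line only locally, and near other parameter values it may leave that line. I need to confirm that no uniformity across lines is demanded, i.e. that a function smooth on \(\R^2\setminus\{0\}\) and smooth along each line is smooth on \(P\), even if it is unbounded in angular directions near \(0\). If the paper's definition of the pencil diffeology imposes extra conditions, I would verify that \(a\) inherits them from \(f\), the only nontrivial operation being division by \(r^2\) combined with Hadamard's lemma along each line.
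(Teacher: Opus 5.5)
Your proposal is correct and follows the paper's proof essentially step for step. The shared steps are: componentwise smoothness by restricting a locally lifted smooth family \(F\colon U_0\times O\to\R\) to lines; the linewise product rule for \(I_0(P)^2\subset\ker\tilde J\); and the decomposition \(F=x_1A_1+x_2A_2\) with \(A_i=x_iF/(x_1^2+x_2^2)\), checked on each line by Hadamard's lemma (the paper divides once using \(\varphi_w'(0)=0\), while you factor out \(t^2\), which amounts to the same thing).

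The characterization you were worried about is exactly \Cref{lem:pencil-smooth-test}, and it needs no uniformity across lines. The one adjustment is that germ representatives live on \(D\)-open sets \(O\), which need not contain an ordinary punctured disc (\Cref{ex:D-open-not-usual-open}). So your \emph{punctured neighbourhood} should be read as \(O\setminus\{0\}\). By \Cref{lem:D-open} this set is still ordinary open, and \(O\cap L\) is an ordinary neighbourhood of \(0\) in each line \(L\), which is all your argument uses.
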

Thus the collection of linewise derivatives detects the entire class of a germ in \(E_0(P)\).
A germ lies in \(I_0(P)^2\) exactly when all of its linewise derivatives vanish.

At the level of underlying vector spaces, the target \(\prod_{L\in\RP^1}L^*\) can be viewed as the vector space \(\Odd(S^1)\) of odd functions on \(S^1\).
It remains to identify \(J(E_0(P))\) as a subspace of \(\Odd(S^1)\).
In the ordinary plane, the corresponding function on \(S^1\) is the restriction of a single linear functional on \(\R^2\).
For the countable bouquet of lines studied in our previous work \cite{taho-nonsmooth}, the derivatives along the different branches can be chosen independently.
With respect to linewise first-order data, the pencil diffeology lies between these two cases.
Its linewise derivatives need not come from a single linear functional, but they cannot vary independently.
The possible linewise derivative functions are exactly the odd functions on \(S^1\) satisfying a weak form of continuity called \(\sigma\)-continuity.
We denote this space by \(\Odd_\sigma(S^1)\).

\begin{restatable*}{thm}{mainimagetheorem}\label{thm:main-image}
Under this vector-space identification, the following equality holds.
\[
  J(E_0(P))=\Odd_\sigma(S^1).
\]
\end{restatable*}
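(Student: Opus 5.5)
The plan is to prove the two inclusions separately. Throughout, an odd function \(g\) on \(S^1\) is called \(\sigma\)-continuous when \(S^1=\bigcup_n F_n\) for an increasing sequence of closed sets \(F_n\), which may be taken symmetric, with each restriction \(g|_{F_n}\) continuous. I expect this to be equivalent to whatever formulation of \(\Odd_\sigma(S^1)\) the paper fixes, and I would check that first. Two facts about \(P\) are used repeatedly.
\begin{itemize}
\item A germ \(f\in I_0(P)\) is a function that is smooth on a punctured neighbourhood in \(\R^2\setminus\{0\}\) and smooth along each line through \(0\) near \(0\). This holds because plots hitting \(0\) locally lie in one line, while plots avoiding \(0\) are ordinary plots.
\item A D-open neighbourhood of \(0\) may be very thin. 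It contains \(\{t\theta : |t|<r(\theta)\}\) for some positive function \(r\), which need not be continuous or bounded below.
\end{itemize}

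\emph{Inclusion \(J(E_0(P))\subset\Odd_\sigma(S^1)\).} Let \(f\in I_0(P)\) represent a class and let \(g(\theta)=\tilde J(f)_{[\theta]}(\theta)\) be the corresponding odd function. For each \(n\), let \(F_n\) be the set of \(\theta\) such that \(f\) is defined on \(\{t\theta: |t|\le 1/n\}\) and \(\sup_{|t|\le 1/n}|\partial_t^2 f(t\theta)|\le n\). These sets are symmetric and increasing. Their union is \(S^1\), because \(f\) is smooth along each line.

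On \(F_n\), Taylor's theorem gives \(|f(t\theta)/t-g(\theta)|\le n|t|/2\). Hence the functions \(\theta\mapsto f(t\theta)/t\) converge to \(g\) uniformly on \(F_n\) as \(t\to0\). For fixed \(t\neq0\) these functions are continuous in \(\theta\), since \(f\) is smooth off the origin. So \(g|_{F_n}\) is continuous.

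The delicate point is closedness of \(F_n\). I would handle it by replacing \(F_n\) with its closure. The uniform Taylor bound passes to limits of directions because of continuity of \(f\) off \(0\), but only once one knows \(f\) is defined on the limiting segment. If that fails, I would instead use the weaker formulation with non-closed \(F_n\) and adjust the converse accordingly.

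\emph{Inclusion \(\Odd_\sigma(S^1)\subset J(E_0(P))\).} Given \(g\) and closed sets \(F_n\) as above, the steps are as follows.
\begin{enumerate}
\item Extend each \(g|_{F_n}\) to a continuous function on \(S^1\) using Tietze's theorem.
\item Approximate that extension by a smooth odd function \(g_n\) on \(S^1\) with \(\sup_{F_n}|g_n-g|\le 2^{-n^2}\). Oddness is obtained by symmetrizing.
\item Choose a smooth partition of unity \((\psi_n)\) on \((0,\infty)\) with \(\psi_n\) supported in \([2^{-n-2},2^{-n}]\) and \(|\psi_n^{(k)}|\le C_k2^{nk}\).
\item Define
\[
f(t\theta)=t\sum_n\psi_n(|t|)\,g_n(\theta)\qquad(t\neq0),\qquad f(0)=0.
\]
\end{enumerate}
This \(f\) is locally a finite sum off the origin, hence smooth on \(\R^2\setminus\{0\}\).

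Along a direction \(\theta\in F_m\), write \(f(t\theta)=tg(\theta)+t\sum_{n}\psi_n(|t|)(g_n(\theta)-g(\theta))\). For \(n\ge m\) we have \(\theta\in F_n\), so the \(n\)-th term has \(k\)-th \(t\)-derivative bounded by \(C_k2^{nk}2^{-n^2}\). This tends to \(0\) faster than any power of \(|t|\sim2^{-n}\). Hence the remainder is flat at \(t=0\), the restriction \(f|_L\) is smooth near \(0\), and \(d_0(f|_L)\) equals \(g\) on the line. Thus \(J([f])=g\), using \cref{thm:main-J} for well-definedness.

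The main obstacle is making the definition of \(\sigma\)-continuity match both directions exactly. In the forward direction one must produce \emph{closed} sets \(F_n\), so that Tietze applies in the converse. In the converse, the flatness estimate requires the approximation error on \(F_n\) to decay super-exponentially in \(n\). I would attack the closedness first, either by showing the Taylor control survives taking closures or by trading closed sets for a countable union of closed subsets of each \(F_n\).
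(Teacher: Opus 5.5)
\textbf{Converse inclusion.} This is essentially the paper's argument, with base $2$ in place of $e$. It uses a Tietze extension on symmetric increasing closed sets, odd smooth approximants with super-exponentially small error there, a logarithmic partition of unity with derivative bounds of order $|t|^{-k}$, and flatness of the remainder along each line. It is fine.

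\textbf{Forward inclusion.} Here there is a genuine gap, exactly where you flag it: your sets $F_n$ need not be closed, and none of your fallbacks works as stated.
\begin{enumerate}
\item Non-closedness really happens. For $O=B(0,1)\setminus\{\tfrac{1}{2n}\theta_0\}$ and $f$ linear, one gets $F_n=S^1\setminus\{\pm\theta_0\}$.
\item Splitting each $F_n$ into closed pieces fails. Let $O$ omit a sequence of points tending to $0$ with pairwise distinct directions that are dense (as in \Cref{ex:D-open-not-usual-open}). Then each $F_n$ is the complement of a countable dense set. Such a set is not $F_\sigma$, so it cannot be written as a countable union of closed subsets of itself.
\item Non-closed witnesses are not an option. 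The Tietze step needs closed sets, and with arbitrary witnesses the equality is false. Take the odd function equal to $1$ at angles in $\pi\mathbb{Q}\cap(0,\pi)$, $-1$ at their antipodes, and $0$ elsewhere. It is continuous on countably many arbitrary pieces. But by Baire, some member of any countable closed cover contains an arc, where this function is not continuous. So it is not in $J(E)$.
\end{enumerate}

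\textbf{The missing idea.} Impose the quantitative bound only at radii that actually lie in the domain. In the paper's notation, set $\Omega=\{(r,v)\mid rv\in O\}\subset(0,\infty)\times S^1$, which is open, and $H(r,v)=F(rv)/r$. The paper uses
\[
  C_{M,N}=\{v\in S^1\mid |H(r,v)-H(s,v)|\le M(r+s)\text{ for all }r,s\in(0,1/N)\text{ with }(r,v),(s,v)\in\Omega\}.
\]
\begin{itemize}
\item Closedness is immediate. If $v_j\to v$ and $(r,v),(s,v)\in\Omega$, then $(r,v_j),(s,v_j)\in\Omega$ for large $j$. The inequality then passes to the limit by continuity of $H$.
\item Continuity of $h$ on $C_{M,N}$ cannot use uniform convergence, since full segments are no longer available. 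Instead, fix one radius $r_0$ with $(r_0,v_0)\in\Omega$; it stays in $\Omega$ for nearby $v$. Let $s\to0$ in the defining inequality, which D-openness allows for every $v$, to get $|H(r_0,v)-h(v)|\le Mr_0$. Then finish with an $\eta/3$ argument.
\end{itemize}

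\textbf{Your closure idea also works, in the same spirit.} Take $\theta_k\in F_n$ with $\theta_k\to\theta$.
\begin{itemize}
\item Transport the Taylor bound only at those $t$ with $t\theta\in O$; these include all small $t$.
\item Letting first $k\to\infty$ and then $t\to0$ gives $g(\theta_k)\to g(\theta)$.
\item Hence $|f(t\theta)/t-g(\theta)|\le n|t|/2$ holds on $\overline{F_n}$ whenever $t\theta\in O$. Continuity of $g$ on $\overline{F_n}$ follows by the same fixed-radius argument.
\end{itemize}
As written, though, you supply neither argument and you doubt the closure route. So the forward inclusion is incomplete at its one nontrivial step.
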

The new phenomenon is angular regularity.
The Zariski cotangent space records a directionwise first-order invariant whose possible angular behavior is governed by \(\sigma\)-continuity rather than ordinary continuity.
This \(\sigma\)-continuity reflects the standard smooth structure away from the origin, which allows smooth parametrizations to move between different lines.

This description also lets us compute, at the origin of \(P\), the external tangent space of Christensen--Wu \cite[Definition~3.10]{CW}.
For this purpose, we equip \(\Odd_\sigma(S^1)\) with the diffeology transported from \(E_0(P)\) through the identification above.
Here and below, \(\Hom_{\R}^{\infty}(V,W)\) denotes the space of smooth \(\R\)-linear maps \(V\to W\).

\begin{restatable*}{thm}{tangentcomputationtheorem}\label{thm:tangent-computation}
For \(v\in S^1\), denote evaluation at \(v\) on \(\Odd_\sigma(S^1)\) by \(\ev_v\).
Under the identification \(E_0(P)\cong\Odd_\sigma(S^1)\), we have
\[
  \external{0}{P}
  \cong
  \Hom_{\R}^{\infty}\bigl(\Odd_\sigma(S^1),\R\bigr)
  =\operatorname{span}\{\ev_v\mid v\in S^1\}
  \cong\bigoplus_{L\in\RP^1}L.
\]
\end{restatable*}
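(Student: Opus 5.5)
Following Christensen--Wu, the external tangent space is $\external{0}{P}=\{D\in\Hom_{\R}^{\infty}(\germ(P,0),\R)\mid D \text{ is a derivation at } 0\}$. The first step is to identify it with $\Hom_{\R}^{\infty}(E_0(P),\R)$ in the usual way. A derivation kills constants and kills $I_0(P)^2$ by the Leibniz rule. Conversely, a smooth functional on $E_0(P)$ gives a derivation via $f\mapsto \phi([f-f(0)])$. Smoothness must be checked in both directions. This uses that $\germ(P,0)\cong\R\oplus I_0(P)$ is a smooth splitting and that the quotient $I_0(P)\to E_0(P)$ carries the quotient diffeology. Transporting along \cref{thm:main-J} and \cref{thm:main-image} then gives the first isomorphism.

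Next I show that each $\ev_v$ is smooth, and hence so is their span. The map $\ev_v\circ J$ sends $[F]$ to $d_0(F|_L)(v)$. It is the composite of the smooth map $\tilde J$ of \cref{thm:main-J} with the projection $\prod_L L^*\to L^*$ and evaluation at $v$, and both of these are smooth for the product diffeology. Note that $\ev_{-v}=-\ev_v$. The span of the $\ev_v$ is therefore spanned by one evaluation per line. These are linearly independent, since $\Odd_\sigma(S^1)$ contains odd functions supported on a single antipodal pair (presumably, since $\sigma$-continuity should allow finitely supported odd functions). So the span is $\bigoplus_{L\in\RP^1}L$, with $L\ni tv\mapsto t\,\ev_v$.

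The main step is the converse: every smooth linear $\phi\colon\Odd_\sigma(S^1)\to\R$ is a finite combination of evaluations. My plan has two parts.

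First, show that $\phi$ vanishes on every $g$ that vanishes on a neighbourhood of a finite set $\{\pm v_1,\dots,\pm v_k\}$ outside of which $\phi$ is "supported". Concretely, I would argue by contradiction. Suppose there are infinitely many lines $L_n$ and functions $g_n$ supported near $\pm L_n$ with $\phi(g_n)=1$. The lines $L_n$ accumulate somewhere, and I would build a smooth plot $t\mapsto \sum_n \lambda_n(t) g_n$ into $E_0(P)$. Here the $\lambda_n$ are bump functions with disjoint supports accumulating at $t=0$, and the $g_n$ are realized by germs $F_n$ supported in thin sectors around $L_n$. Smoothness of this plot in $P$ has to come from the plots of $P$ near the origin lying in single lines locally. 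Then $\phi$ along the plot is not smooth (or is unbounded), which is the desired contradiction. This is the hardest step, since it requires understanding exactly which families are smooth in the transported diffeology on $\Odd_\sigma(S^1)$.

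Second, show that a functional killing all $g$ vanishing at $\pm v_1,\dots,\pm v_k$ is a combination of the $\ev_{v_i}$. Those $g$ form the kernel of $(\ev_{v_1},\dots,\ev_{v_k})$, and linear algebra gives the conclusion. I need a density or localization argument for this. Any $g$ vanishing at the $v_i$ should be a difference of one function vanishing near the $v_i$ and one lying in the kernel of $\phi$ by smoothness. For the latter, I would use the smooth path $s\mapsto g\cdot\chi_s$ with shrinking cutoffs $\chi_s$, plus the smoothness of $\phi$.
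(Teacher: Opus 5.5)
Your plan is correct in outline, and for the converse it takes a genuinely different route from the paper. The paper only ever tests $\Lambda$ on smooth odd profiles until the very end. It restricts $\Lambda$ to $C^\infty_{\mathrm{odd}}(S^1)$ and uses the agreement of the $D$-topology with the Fr\'echet topology \cite{CSW-D-topology} to obtain a distribution $T$. It proves $\supp T$ finite via the finite-coordinate property of smooth functionals on $\prod_{\N}\R$, and invokes the structure theorem for point-supported distributions. It then kills the derivative terms $A_{i,j}$ ($j\ge1$) with the rescaling plot $s\mapsto\beta_{|s|}$. Only then does it reach arbitrary $h\in\Odd_\sigma(S^1)$, through \Cref{thm:smooth-profile-approximation}.

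You instead work directly on $\Odd_\sigma(S^1)$. You use a localization-defined support for $\phi$: the points $v$ such that every neighbourhood $W$ admits $g$ vanishing off $W\cup(-W)$ with $\phi(g)\neq0$. You prove it finite with one discontinuous plot, which is all the paper really extracts from $\prod_{\N}\R$. Then, for $g$ vanishing on $K$, you use a single shrinking-cutoff plot $\gamma(s)=g\chi_{|s|}$ with $\gamma(0)=0$. By your first step $\phi\circ\gamma\equiv\phi(g)$ for $s\neq0$, so continuity forces $\phi(g)=0$. That one plot does the work of the structure theorem, the rescaling argument and the density theorem at once, so your argument is self-contained and distribution-free. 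The price is that you must realize non-smooth profiles by germs and manipulate cutoffs of them, whereas the paper only needs the trivial family $r\,q^\vee(u,v)$ for smooth profiles.

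Three points must be supplied.

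First, localizing the $F_n$ in disjoint sectors only gives smoothness at the origin of $P$. Near $x_0\neq0$ on the limit line of the $L_n$, the sectors accumulate, and nothing makes $\sum_n\lambda_n(t)F_n(x)$ smooth, or even continuous, at $(0,x_0)$. You must also cut off radially, e.g.\ by $\rho(|x|/\epsilon_n)$ with $\epsilon_n\to0$ as in the paper's $F_a$, so that the sum is locally finite off the origin. Angular cutoffs are harmless: $\chi(x/|x|)F(x)$ with $\chi$ even is pencil-smooth with linewise derivative $\chi g$. Representatives on a common Euclidean ball are available from the proof of \Cref{prop:image-superset}.

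Second, passing from ``finitely many support points'' to ``$\phi$ kills every $g$ vanishing near $K$'' needs a compactness argument with an antipodally even partition of unity.

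Third, the naive representative $G(x)\chi_{|s|}(x/|x|)$ of $\gamma$ does not extend smoothly to $s=0$. Take $\chi_a$ even, equal to $1$ near $K$, supported in the $a$-neighbourhood of $K$, and smooth in $(a,v)$. Then tie the scale to $|x|$, as in the paper's family $\mathcal G$, by using $G(x)\,\chi_{\sqrt{s^2+|x|^2}}(x/|x|)$. Along $\R w$ near $(s,\tau)=(0,0)$ this equals $G(\tau w)$ if $w\in K$ and vanishes identically otherwise, so it is smooth. Its linewise derivative at $s=0$ is $g\cdot 1_K$, which is $0$ exactly because $g|_K=0$.
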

In particular, \Cref{cor:tangent-separation} gives
\(\external{0}{P}\subsetneq\righttangent{0}{P}\).
In the ordinary plane, the linewise derivatives come from a single linear functional, whereas in the countable bouquet they can be chosen independently.
For \(P\), they are related exactly by \(\sigma\)-continuity.
This \(\sigma\)-continuity links the different lines but still allows algebraic linear functionals on \(E_0(P)\) that are not smooth.

\section{Diffeological Background}

We collect the diffeological background used throughout the paper.
The goal is to make the later arguments readable without assuming prior familiarity with the subject.
General references are \cite{Souriau} and \cite{IZ}.

\begin{defi}[{\cite[1.5]{IZ}}]\label{def:diffeology}
  Let \(X\) be a set.
  A \emph{parametrization} of \(X\) is a map \(U\to X\), where \(U\) is an open subset of \(\R^n\) for some \(n\ge0\).
  A \emph{diffeology} on \(X\) is a set of parametrizations \(\mathscr{D}_X\), whose elements are called plots, satisfying the following three axioms.
  \begin{description}
    \item[Covering] Every constant parametrization \(U\to X\) is a plot.
    \item[Locality] Let \(p\colon U\to X\) be a parametrization. If there is an open covering \(\{U_\alpha\}\) of \(U\) such that \(p|_{U_\alpha}\in\mathscr{D}_X\) for all \(\alpha\), then \(p\) itself is a plot.
    \item[Smooth compatibility] For every plot \(p\colon U\to X\), every open set \(V\subset\R^m\), and every ordinary smooth map \(f\colon V\to U\), the composite \(p\circ f\colon V\to X\) is also a plot.
  \end{description}
  A diffeological space is a set equipped with a diffeology.
  We usually write \(X\) for the diffeological space \((X,\mathscr{D}_X)\).
\end{defi}

\begin{defi}[{\cite[1.14]{IZ}}]
A map \(f\colon X\to Y\) between diffeological spaces is \emph{smooth} if for every plot \(p\colon \plotdom{p}\to X\), the composition \(f\circ p\) is a plot of \(Y\).
\end{defi}

\begin{defi}[{\cite[2.8]{IZ}}]\label{def:D-topology}
  Let \(X\) be a diffeological space.
  The \emph{\(D\)-topology} on \(X\) is the finest topology that makes all plots \(p\colon \plotdom{p}\to X\) continuous.
\end{defi}

We use the following standard diffeologies.
The product diffeology on \(\prod_a X_a\) is defined componentwise.
The subset diffeology on \(Y\subset X\) is defined by the plots \(p\colon \plotdom{p}\to Y\) whose composite with \(Y\hookrightarrow X\) is a plot of \(X\).
The quotient diffeology on a quotient set \(X/{\sim}\) is the final diffeology for the quotient map \(X\to X/{\sim}\).

If \(X\) and \(Y\) are diffeological spaces, the functional diffeology on \(\smooth(X,Y)\) is defined by declaring \(q\colon \plotdom{q}\to\smooth(X,Y)\) to be a plot when the adjoint map
\[
  \plotdom{q}\times X\longrightarrow Y,\qquad (u,x)\mapsto q(u)(x),
\]
is smooth.

\subsection{Germ algebras, Zariski cotangent spaces, and tangent spaces}

\begin{defi}[Germ algebra and Zariski cotangent space]\label{def:germ-algebra}
  Let \((X,x)\) be a based diffeological space.
  Following \cite[paragraph before Definition~3.10]{CW}, the set of germs of smooth functions at~\(x\) is defined as the colimit
  \[
    \germ(X,x)=\colim_{x\in B\subset X}\smooth(B,\R),
  \]
  where the colimit ranges over all \(D\)-open neighborhoods \(B\) of \(x\).
  The space \(\germ(X,x)\) carries the quotient diffeology induced from the coproduct of the functional diffeologies on the spaces \(\smooth(B,\R)\), and becomes a diffeological \(\R\)-algebra under the pointwise operations.

  We write \(I_x(X)=\ker(\ev_x\colon\germ(X,x)\to\R)\), where \(\ev_x(f)=f(x)\), and equip \(I_x(X)\) with the subdiffeology from \(\germ(X,x)\).
  Here \(I_x^2(X)\) denotes the ideal consisting of finite sums of products \(fg\) with \(f,g\in I_x(X)\).
  We write \(E_x(X)=I_x(X)/I_x^2(X)\) for the Zariski cotangent space at \(x\), equipped with the quotient diffeology.
\end{defi}

\begin{remark}
The adjective \emph{Zariski} distinguishes \(E_x(X)\) from the cotangent
space \(\Lambda_x^1(X)\) obtained from diffeological differential \(1\)-forms
\cite[6.48]{IZ}. For a smooth manifold \(M\), both spaces canonically agree
with the ordinary cotangent space at \(x\).
\end{remark}

We next recall the external tangent space and the right tangent space.
For the terminology of right tangent spaces we follow our previous work
\cite{taho-variants}, and for external tangent spaces we follow \cite{CW}.

\begin{defi}\label{def:external-right}
  Let \((X,x)\) be a based diffeological space and let \(\germ(X,x)\) be the diffeological \(\R\)-algebra of germs of smooth functions at \(x\) (\Cref{def:germ-algebra}).
  A map \(D\colon\germ(X,x)\to\R\) is said to be a \emph{right tangent vector} if it is linear and satisfies the Leibniz rule
  \[
    D(gh)=g(x)D(h)+D(g)h(x)\quad \text{for all }g,h\in\germ(X,x).
  \]
  The set of all right tangent vectors, denoted by \(\righttangent{x}{X}\), forms a vector space called the \emph{right tangent space of \(X\) at \(x\)} \cite{taho-variants}.

  An element \(D\in\righttangent{x}{X}\) is further called an \emph{external tangent vector} if \(D\) is smooth as a map of diffeological spaces.
  The subspace of all external tangent vectors, denoted by \(\external{x}{X}\), is called the \emph{external tangent space of \(X\) at \(x\)} \cite[Definition~3.10]{CW}.
\end{defi}

\begin{prop}[{\cite[Proposition~3.11]{CW} and \cite[Proposition~2.14]{taho-nonsmooth}}]\label{prop:external-right-Ix}
  Let \((X,x)\) be a based diffeological space.
  With the notation above, there are natural vector-space isomorphisms
  \[
    \external{x}{X}\cong
    \Hom_{\R}^{\infty}(E_x(X),\R)
    =
    \{F\colon E_x(X)\to\R\mid F\text{ is smooth and \(\R\)-linear}\},
  \]
  and
  \[
    \righttangent{x}{X}\cong
    \Hom_{\R}(E_x(X),\R)
    =
    \{F\colon E_x(X)\to\R\mid F\text{ is \(\R\)-linear}\}.
  \]
\end{prop}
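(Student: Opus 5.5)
The plan is to build both isomorphisms from the splitting \(\germ(X,x)=\R\cdot 1\oplus I_x(X)\), and then show that this correspondence preserves smoothness in both directions.

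\emph{Step 1: algebraic correspondence.} Let \(D\in\righttangent{x}{X}\).
\begin{itemize}
\item Applying the Leibniz rule to \(1\cdot 1\) gives \(D(1)=2D(1)\), so \(D(1)=0\).
\item For \(f,g\in I_x(X)\) we get \(D(fg)=f(x)D(g)+D(f)g(x)=0\). By linearity \(D\) therefore kills \(I_x^2(X)\).
\item Hence \(D|_{I_x(X)}\) descends to a linear map \(F_D\colon E_x(X)\to\R\).
\end{itemize}
Conversely, given a linear \(F\colon E_x(X)\to\R\), define
\[
  D_F(f)=F\bigl([f-f(x)]\bigr),
\]
where \(f-f(x)\in I_x(X)\). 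Linearity is clear. For the Leibniz rule, use the identity
\[
  fg-f(x)g(x)=(f-f(x))(g-g(x))+f(x)(g-g(x))+g(x)(f-f(x)).
\]
The first term lies in \(I_x^2(X)\), so \(D_F(fg)=f(x)D_F(g)+g(x)D_F(f)\). The maps \(D\mapsto F_D\) and \(F\mapsto D_F\) are linear and mutually inverse. For the second direction, \(F_{D_F}([h])=F([h])\) because \(h(x)=0\) for \(h\in I_x(X)\). For the first direction, \(D_{F_D}(f)=D(f-f(x))=D(f)\) because \(D(1)=0\). Naturality in based smooth maps follows since everything is defined by precomposition. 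This gives the isomorphism \(\righttangent{x}{X}\cong\Hom_{\R}(E_x(X),\R)\).

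\emph{Step 2: smoothness.} It remains to show that \(D\) is smooth if and only if \(F_D\) is smooth; this will give the isomorphism for \(\external{x}{X}\).
\begin{itemize}
\item If \(D\) is smooth, then \(D|_{I_x(X)}\) is smooth for the subset diffeology. Since \(E_x(X)\) carries the quotient diffeology of \(\pi\colon I_x(X)\to E_x(X)\), the universal property of quotient diffeologies makes the induced map \(F_D\) smooth.
\item Conversely, \(D_F=F\circ\pi\circ r\), where \(r\colon\germ(X,x)\to I_x(X)\), \(r(f)=f-\ev_x(f)\cdot 1\). So it suffices that \(r\) is smooth as a map into \(I_x(X)\) with its subset diffeology.
\end{itemize}

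\emph{Main obstacle.} The main point to verify is that \(\ev_x\) and the vector space operations on \(\germ(X,x)\) are smooth for the colimit (quotient) diffeology. The definition asserts that \(\germ(X,x)\) is a diffeological \(\R\)-algebra, which covers the operations; \(\ev_x\) still needs a check. A plot of \(\germ(X,x)\) locally lifts to a plot \(q\colon U\to\smooth(B,\R)\) for some \(D\)-open \(B\ni x\). The composite \(\ev_x\circ q\) is then \(u\mapsto q(u)(x)\). This is smooth because the adjoint \(U\times B\to\R\) is smooth and \(u\mapsto(u,x)\) is smooth. By the locality axiom, \(\ev_x\) is smooth. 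Hence \(r\) is smooth into \(\germ(X,x)\), and since its image lies in \(I_x(X)\), it is smooth into the subspace. This completes both isomorphisms.
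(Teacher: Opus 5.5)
Your proof is correct. The Leibniz computations ($D(1)=0$ and $D(I_x^2(X))=0$), the inverse $D_F(f)=F([f-f(x)])$ checked via your three-term identity, and the transfer of smoothness all go through; the last uses the universal property of the quotient $\pi\colon I_x(X)\to E_x(X)$ in one direction and, in the other, the smoothness of $\ev_x$, which you verify on local lifts of plots to some $\smooth(B,\R)$. The paper gives no proof of this proposition and simply imports it from \cite{CW} and \cite{taho-nonsmooth}; your argument is the standard one behind those citations.
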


\section{The Pencil Diffeology and Linewise Derivatives}

\begin{defi}[Pencil diffeology]\label{def:pencil}
On the set \(\R^2\), we say that a parametrization \(p\colon U\to\R^2\) is a plot if, for every \(u_0\in U\), there is an open neighborhood \(V\subset U\) of \(u_0\) such that one of the following conditions holds.
\begin{enumerate}
  \item \(p(u_0)\neq0\), and \(p|_V\colon V\to\R^2\) is ordinary smooth.
  \item \(p(u_0)=0\), and there is a line \(L\subset\R^2\) through the origin such that \(p(V)\subset L\) and \(p|_V\colon V\to L\) is ordinary smooth.
\end{enumerate}
These plots satisfy the three axioms of a diffeology.
We call this diffeology the \emph{pencil diffeology} and write \(P\) for \(\R^2\) equipped with it.
\end{defi}

Every rotation of \(\R^2\) is a diffeomorphism of \(P\), since it maps
each line through the origin to another such line. Nevertheless, the action
\[
  SO(2)\times P\longrightarrow P,\qquad (R,x)\mapsto Rx,
\]
is not smooth. Indeed, if \(R_s\) denotes rotation through angle \(s\),
smoothness of the action would imply that
\[
  \R^2\longrightarrow P,\qquad
  (s,t)\mapsto R_s(te_1)=t(\cos s,\sin s)
\]
is a plot. Its image near \((0,0)\), however, is not contained in any single
line through the origin.

\begin{lemma}\label{lem:D-open}
For a subset \(O\subset P\), the following conditions are equivalent.
\begin{enumerate}
  \item \(O\) is \(D\)-open.
  \item \(O^\times=O\cap(\R^2\setminus\{0\})\) is open in the usual topology of \(\R^2\setminus\{0\}\), and \(O\cap L\) is open in the usual topology of \(L\) for every line \(L\subset\R^2\) through the origin.
\end{enumerate}
\end{lemma}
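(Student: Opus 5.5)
We prove both implications directly from the definition of the \(D\)-topology (\Cref{def:D-topology}). Recall that \(O\) is \(D\)-open if and only if \(p^{-1}(O)\) is open in \(\plotdom{p}\) for every plot \(p\) of \(P\). The plan is to test this against a few well-chosen families of plots for (1)\(\Rightarrow\)(2). For (2)\(\Rightarrow\)(1), we use the local description of plots in \Cref{def:pencil}.

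For (1)\(\Rightarrow\)(2), assume \(O\) is \(D\)-open. First take the inclusion \(\iota\colon\R^2\setminus\{0\}\to P\) as a parametrization on the open set \(\R^2\setminus\{0\}\). It never hits the origin and is ordinary smooth, so condition (i) of \Cref{def:pencil} holds at every point and \(\iota\) is a plot. Hence \(\iota^{-1}(O)=O^\times\) is open. Next, for a line \(L\) through the origin with unit direction \(v\), consider \(p_L\colon\R\to P\), \(t\mapsto tv\). At \(t_0\neq0\) it is ordinary smooth near \(t_0\), and at \(t_0=0\) it lies in \(L\) and is smooth into \(L\). So \(p_L\) is a plot, and \(p_L^{-1}(O)\) is open in \(\R\). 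Since \(p_L\) is a homeomorphism \(\R\to L\) for the usual topology, \(O\cap L\) is open in \(L\).

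For (2)\(\Rightarrow\)(1), let \(p\colon U\to P\) be a plot and \(u_0\in p^{-1}(O)\). We must find a neighborhood of \(u_0\) inside \(p^{-1}(O)\). Choose \(V\) as in \Cref{def:pencil}; there are two cases.

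In case (i), \(p(u_0)\neq0\) and \(p|_V\) is ordinary smooth, hence continuous into \(\R^2\). Shrinking \(V\), we may assume \(p(V)\subset\R^2\setminus\{0\}\). Then \(p|_V^{-1}(O)=p|_V^{-1}(O^\times)\) is open in \(V\) by openness of \(O^\times\).

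In case (ii), \(p(u_0)=0\), and \(p|_V\) takes values in a line \(L\) and is continuous into \(L\). Then \(p|_V^{-1}(O)=p|_V^{-1}(O\cap L)\) is open because \(O\cap L\) is open in \(L\).

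By locality, \(p^{-1}(O)\) is open, so \(O\) is \(D\)-open.

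There is no real obstacle here. The only points needing care are checking that \(\iota\) and \(p_L\) are genuinely plots, including the point \(t=0\) for \(p_L\), and the shrinking of \(V\) in case (i) so that the image avoids the origin. Notably, in case (ii) we need only openness of \(O\cap L\), not of \(O^\times\), which is exactly why condition (2) is phrased with both requirements.
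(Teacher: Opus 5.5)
Your proof is correct and is essentially the paper's argument: for (1)$\Rightarrow$(2) you pull back along the plots $\R^2\setminus\{0\}\to P$ and $t\mapsto tv$, and for (2)$\Rightarrow$(1) you split on whether $p(u_0)\neq0$ or $p(u_0)=0$, using openness of $O^\times$ or of $O\cap L$ respectively. The closing step only uses that a set which is a neighborhood of each of its points is open, not the locality axiom, but that is a matter of wording.
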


\begin{proof}
If \(O\) is \(D\)-open, the two ordinary openness conditions follow by pulling back \(O\) along the plots \(\R^2\setminus\{0\}\to P\) and \(L\to P\) for each line \(L\subset\R^2\) through the origin.

Conversely, assume the two ordinary openness conditions.
Let \(p\colon \plotdom{p}\to P\) be a plot.
We prove that \(p^{-1}(O)\) is open.
Fix \(u_0\in p^{-1}(O)\).
If \(p(u_0)\neq0\), then there is an open neighborhood \(W\subset\plotdom{p}\) of \(u_0\) on which \(p\) is an ordinary smooth map into \(\R^2\setminus\{0\}\).
Since \(p(u_0)\in O^\times\) and \(O^\times\) is ordinary open, \(W\cap p^{-1}(O^\times)\) is an open neighborhood of \(u_0\) contained in \(p^{-1}(O)\).
Suppose instead that \(p(u_0)=0\).
Then we have \(0\in O\).
By the definition of the pencil diffeology, we can choose an open neighborhood \(W\subset\plotdom{p}\) of \(u_0\) and a line \(L\subset\R^2\) through the origin such that \(p(W)\subset L\) and \(p|_W\colon W\to L\) is ordinary smooth.
Since \(O\cap L\) is ordinary open in \(L\), the set \(W\cap p^{-1}(O\cap L)\) is an open neighborhood of \(u_0\) contained in \(p^{-1}(O)\).
Thus every point of \(p^{-1}(O)\) has an open neighborhood contained in \(p^{-1}(O)\), so \(p^{-1}(O)\) is open.
Since this holds for every plot \(p\), the set \(O\) is \(D\)-open.
\end{proof}

\begin{ex}\label{ex:D-open-not-usual-open}
The \(D\)-topology of \(P\) is strictly finer than the ordinary topology of \(\R^2\).
Indeed, every ordinary open subset of \(\R^2\) is \(D\)-open, since every plot of \(P\) is locally ordinary continuous as a map to \(\R^2\).
Let
\[
  C=\{(1/n,1/n^2)\in \R^2\mid n\ge1\}
\]
and set \(O=\R^2\setminus C\).
Then \(O^\times=O\cap(\R^2\setminus\{0\})\) is open in \(\R^2\setminus\{0\}\), because \(C\) has no accumulation point away from \(0\).
Moreover, each line through the origin meets \(C\) in at most one point, since the points of \(C\) have pairwise distinct slopes \(1/n\).
Hence \(O\cap L\) is open in \(L\) for every line \(L\) through the origin.
Thus \(O\) is \(D\)-open by \Cref{lem:D-open}.
However, \(O\) is not ordinary open in \(\R^2\), since \(0\in O\) and every ordinary neighborhood of \(0\) meets \(C\).
\end{ex}

\begin{lemma}\label{lem:pencil-smooth-test}
Let \(O\subset P\) be \(D\)-open, and let \(F\colon O\to\R\) be a function.
Then \(F\) is smooth if and only if the following two conditions hold.
\begin{enumerate}
  \item \(F|_{O^\times}\) is ordinary smooth on \(O^\times=O\cap(\R^2\setminus\{0\})\).
  \item \(F|_{O\cap L}\) is ordinary smooth for every line \(L\subset\R^2\) through the origin.
\end{enumerate}
\end{lemma}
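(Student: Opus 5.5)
The plan is to prove the two directions separately, using that $O$ carries the subset diffeology from $P$ (a $D$-open subset with the induced structure). A plot of $O$ is then a plot of $P$ with image in $O$. Since $O$ is $D$-open, Lemma~\ref{lem:D-open} gives that $O^\times$ is ordinary open in $\R^2\setminus\{0\}$ and $O\cap L$ is ordinary open in $L$ for each line $L$.

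\emph{Necessity.} Assume $F$ is smooth.
\begin{enumerate}
  \item The inclusion $O^\times\hookrightarrow P$ is a plot by condition (1) of Definition~\ref{def:pencil}, since $O^\times$ is an open subset of $\R^2$ avoiding $0$. Its image lies in $O$, so $F|_{O^\times}$ is an ordinary smooth function on $O^\times$.
  \item Fix a line $L$ and a linear parametrization $t\mapsto tv$ of $L$. The set $W=\{t\in\R \mid tv\in O\}$ is open in $\R$. The map $W\to P$, $t\mapsto tv$, is a plot. At points with $tv\neq0$ this follows from (1) of Definition~\ref{def:pencil}, and at $t=0$ from (2) with the line $L$.
  \item Hence $t\mapsto F(tv)$ is smooth on $W$, which means $F|_{O\cap L}$ is ordinary smooth.
\end{enumerate}

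\emph{Sufficiency.} Assume conditions (1) and (2), and let $p\colon U\to O$ be a plot. Smoothness is local, so it suffices to show $F\circ p$ is smooth near each $u_0\in U$. Split into two cases.
\begin{enumerate}
  \item If $p(u_0)\neq0$, choose a neighborhood $V$ as in (1) of Definition~\ref{def:pencil}, with $p|_V$ ordinary smooth into $\R^2$. Shrink $V$ by continuity so that $p(V)\subset\R^2\setminus\{0\}$, hence $p(V)\subset O^\times$. Then $F\circ p|_V=F|_{O^\times}\circ p|_V$ is a composite of ordinary smooth maps between open subsets of Euclidean spaces.
  \item If $p(u_0)=0$, choose $V$ and a line $L$ with $p(V)\subset L$ and $p|_V\colon V\to L$ ordinary smooth. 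Then $p(V)\subset O\cap L$, which is open in $L\cong\R$. Hence $F\circ p|_V=F|_{O\cap L}\circ p|_V$ is smooth by (2).
\end{enumerate}
By locality, $F\circ p$ is smooth, so $F$ is smooth.

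The argument is essentially routine. The only points needing care are, first, that the inclusions of $O^\times$ and of the punctured-or-not line pieces $O\cap L$ really are plots with open domains, which is supplied by Lemma~\ref{lem:D-open}. Second, in the case $p(u_0)\neq0$ one must shrink $V$ so that its image avoids the origin; otherwise condition (1) on $O^\times$ alone would not apply. I expect no real obstacle beyond these bookkeeping issues.
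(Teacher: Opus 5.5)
Your proof is correct and follows essentially the same route as the paper. For necessity you compose $F$ with the plots $O^\times\hookrightarrow P$ and $t\mapsto tv$, and for sufficiency you split locally on whether $p(u_0)=0$, shrinking so that the plot lands in $O^\times$ or in $O\cap L$; you simply spell out in more detail why these inclusions are plots with open domains.
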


\begin{proof}
Assume first that \(F\) is smooth.
On \(O^\times\), the inclusion \(O^\times\to P\) is locally a plot of \(P\), so \(F|_{O^\times}\) is ordinary smooth.
For each line \(L\) through the origin, the inclusion \(O\cap L\to P\) is locally a plot of \(P\), so \(F|_{O\cap L}\) is ordinary smooth.

Now assume that the two stated conditions hold.
Let \(p\colon \plotdom{p}\to O\) be a plot.
We prove that \(F\circ p\) is ordinary smooth locally on \(\plotdom{p}\).
Fix \(u_0\in\plotdom{p}\).
If \(p(u_0)\neq0\), then after shrinking around \(u_0\), the map \(p\) is an ordinary smooth map into \(O^\times\).
Thus \(F\circ p\) is ordinary smooth near \(u_0\).
If \(p(u_0)=0\), then after shrinking around \(u_0\), the image of \(p\) lies in a line \(L\) through the origin and \(p\) is ordinary smooth as a map into \(L\).
Since \(F|_{O\cap L}\) is ordinary smooth, again \(F\circ p\) is ordinary smooth near \(u_0\).
Hence \(F\circ p\) is ordinary smooth for every plot \(p\), so \(F\) is smooth.
\end{proof}

\begin{ex}
The following example shows that a smooth function on \(P\) need not be ordinary smooth on \(\R^2\).
The function
\[
  f(0,0)=0,\qquad
  f(x,y)=\frac{x^3}{x^2+y^2}\quad ((x,y)\neq(0,0))
\]
is smooth as a map \(P\to\R\).
It is ordinary smooth away from \((0,0)\), and for every \((a,b)\neq(0,0)\) its restriction to the line \(t\mapsto t(a,b)\) is
\[
  f(ta,tb)=t\frac{a^3}{a^2+b^2},
\]
which is smooth in \(t\).
Thus \(f\) is smooth by \Cref{lem:pencil-smooth-test}.
It is not ordinary smooth at \((0,0)\), since its directional derivatives in the directions \((1,0)\), \((0,1)\), and \((1,1)\) are \(1\), \(0\), and \(1/2\), respectively, which is incompatible with linearity.
\end{ex}

\subsection{Derivatives along lines through the origin}

We denote \(\germ(P,0)\) by \(A\), \(I_0(P)\) by \(I\), and \(E_0(P)=I/I^2\) by \(E\).
For \(L\in\RP^1\), regarded as a line through the origin in \(\R^2\), write \(L^*\) for its ordinary dual.
For \(f\in I\), define \(\tilde J(f)\in\prod_{L\in\RP^1}L^*\) by setting its \(L\)-component to be the ordinary derivative \(d_0(f|_L)\in L^*\).

\begin{prop}\label{prop:J-smooth}
The map
\[
  \tilde J\colon I\longrightarrow \prod_{L\in\RP^1}L^*
\]
is well defined, linear, and smooth.
\end{prop}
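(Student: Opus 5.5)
We prove well-definedness, linearity, and smoothness in turn; smoothness needs the most care.

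\emph{Well-definedness.} Represent a germ \(f\in I\) by a smooth function \(F\colon O\to\R\) on a \(D\)-open neighbourhood \(O\) of \(0\), with \(F(0)=0\). For each line \(L\), the set \(O\cap L\) is an ordinary open neighbourhood of \(0\) in \(L\) by \Cref{lem:D-open}. By \Cref{lem:pencil-smooth-test}, \(F|_{O\cap L}\) is ordinary smooth, so \(d_0(F|_L)\in L^*\) exists. If \(F'\colon O'\to\R\) represents the same germ, then \(F\) and \(F'\) agree on some \(D\)-open \(O''\ni0\). Since \(O''\cap L\) is an open neighbourhood of \(0\) in \(L\), the two restrictions agree near \(0\) on \(L\), so their derivatives at \(0\) coincide. \emph{Linearity} follows because restriction to \(L\) and \(d_0\) are both linear, and germ addition and scalar multiplication are pointwise.

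\emph{Smoothness.} The target carries the product diffeology, so it suffices to show that each component \(I\to L^*\), \(f\mapsto d_0(f|_L)\), is smooth. Identify \(L^*\cong\R\) by evaluating at a fixed nonzero \(v\in L\); the component becomes \(f\mapsto \frac{d}{dt}\big|_{t=0}F(tv)\). Let \(q\colon U\to I\) be a plot. \(I\) has the subset diffeology of \(\germ(P,0)\), whose diffeology is the quotient of the coproduct of functional diffeologies. Hence, locally around any \(u_0\in U\), there are a \(D\)-open \(B\ni0\) and a smooth map \(Q\colon V\times B\to\R\) on a neighbourhood \(V\) of \(u_0\) such that \(q(u)\) is the germ of \(Q(u,\cdot)\) for \(u\in V\). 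Here we use the standard description of plots of the coproduct and of the quotient diffeology: they locally lift.

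Now \(B\cap L\) contains an interval \(\{tv:|t|<\varepsilon\}\). The map \(V\times(-\varepsilon,\varepsilon)\to V\times B\), \((u,t)\mapsto(u,tv)\), is smooth, since its second component is a plot of \(P\) lying in the single line \(L\). So \((u,t)\mapsto Q(u,tv)\) is ordinary smooth. Its \(t\)-derivative at \(t=0\) is therefore ordinary smooth in \(u\), and this derivative equals the composite of \(q\) with the \(L\)-component. Locality then finishes the argument.

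The main obstacle is the bookkeeping for plots of \(\germ(P,0)\): the local lifting through the colimit, and the fact that the neighbourhood \(B\) may vary with \(u\). This is handled by working locally in \(u\) and using only the part of \(B\) lying on the line \(L\).
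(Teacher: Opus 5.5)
Your proposal is correct and follows essentially the same route as the paper: check each component of the product, identify \(L^*\cong\R\) by evaluating at a fixed nonzero vector of \(L\), locally lift the plot of \(I\) to a smooth family \(Q\colon V\times B\to\R\), and differentiate the ordinary smooth map \((u,t)\mapsto Q(u,tv)\) at \(t=0\). Your well-definedness and linearity arguments likewise match the paper's, using \Cref{lem:D-open} to restrict \(D\)-open neighbourhoods to ordinary neighbourhoods on each line.
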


\begin{proof}
The definition is independent of the representative of the germ, because two representatives agree on some \(D\)-open neighborhood of \(0\), and by \Cref{lem:D-open} this neighborhood restricts to an ordinary neighborhood of \(0\) in every line \(L\).
Linearity is immediate.

It remains to check smoothness.
Since the target has the product diffeology, it is enough to check every component.
Fix a line \(L=\R w\), with \(w\neq0\), and evaluate \(L^*\) at \(w\).
Evaluation at \(w\) is a linear diffeomorphism \(L^*\cong\R\).
Let \(q\colon \plotdom{q}\to I\) be a plot, and fix \(u_*\in\plotdom{q}\).
Since \(I\) has the subdiffeology from \(A=\germ(P,0)\), the map \(q\) is also a plot of \(A\).
By the quotient diffeology defining the germ algebra, after shrinking around \(u_*\) there are an open neighborhood \(U_0\subset\plotdom{q}\), a \(D\)-open neighborhood \(O\) of \(0\), and a smooth family \[F\colon U_0\times O\to\R\] such that \(q(u)\) is represented by the germ of \(F(u,-)\) at \(0\).
Since \(q(u)\in I\), we have \(F(u,0)=0\) for all \(u\in U_0\).
Let \(I_w=\{t\in\R\mid tw\in O\}\).
By \Cref{lem:D-open}, \(I_w\) is an open neighborhood of \(0\) in \(\R\).
The map \((u,t)\mapsto(u,tw)\) from \(U_0\times I_w\) to \(U_0\times O\) is smooth.
Hence \((u,t)\mapsto F(u,tw)\) is ordinary smooth on \(U_0\times I_w\).
Therefore
\[
  u\mapsto
  \left.\frac{d}{dt}\right|_{t=0}F(u,tw)
\]
is smooth on \(U_0\).
This is the \(w\)-evaluation of the \(L\)-component of \(\tilde J\circ q\).
\end{proof}

\begin{lemma}[One-dimensional Hadamard lemma]\label{lem:hadamard}
Let \(U\subset\R\) be an open neighborhood of \(0\), and let \(\varphi\in\smooth(U)\) satisfy \(\varphi(0)=0\).
After shrinking \(U\), there exists \(\psi\in\smooth(U)\) such that
\[
  \varphi(t)=t\psi(t).
\]
Moreover, we have \(\psi(0)=\varphi'(0)\).
In particular, if \(\varphi'(0)=0\), then the function
\[
  H(t)=
  \begin{cases}
    \varphi(t)/t, & t\neq0,\\
    0, & t=0
  \end{cases}
\]
is smooth near \(0\).
\end{lemma}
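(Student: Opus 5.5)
The plan is to use the integral form of the remainder. Since \(\varphi(0)=0\), I first shrink \(U\) to an open interval \((-\varepsilon,\varepsilon)\) containing \(0\). On this interval, for every \(t\in U\) the whole segment \(\{st\mid s\in[0,1]\}\) lies in \(U\). The fundamental theorem of calculus applied to \(s\mapsto\varphi(st)\) then gives
\[
  \varphi(t)=\varphi(t)-\varphi(0)=\int_0^1\frac{d}{ds}\varphi(st)\,ds=t\int_0^1\varphi'(st)\,ds .
\]
So I define \(\psi(t)=\int_0^1\varphi'(st)\,ds\).

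The main step is showing that \(\psi\) is smooth on \(U\). The integrand \((s,t)\mapsto\varphi'(st)\) is smooth on a neighborhood of \([0,1]\times U\). Because \([0,1]\) is compact, differentiation under the integral sign is justified to all orders: every partial derivative \(\partial_t^k\varphi'(st)=s^k\varphi^{(k+1)}(st)\) is continuous, hence bounded on \([0,1]\times K\) for each compact \(K\subset U\). Therefore \(\psi\in\smooth(U)\).

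Evaluating at \(t=0\) gives \(\psi(0)=\int_0^1\varphi'(0)\,ds=\varphi'(0)\), which is the second assertion.

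For the final claim, suppose \(\varphi'(0)=0\). For \(t\neq0\) we have \(H(t)=\varphi(t)/t=\psi(t)\). At \(t=0\) we have \(H(0)=0=\varphi'(0)=\psi(0)\). Hence \(H=\psi\) on \(U\), and so \(H\) is smooth near \(0\).

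I do not expect a real obstacle. The only points needing care are shrinking \(U\) to an interval, so that the segments \([0,t]\) stay inside the domain, and the standard justification for differentiating under the integral sign.
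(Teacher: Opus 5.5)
Your proof is correct and follows the same route as the paper: shrink \(U\) to an interval, set \(\psi(t)=\int_0^1\varphi'(st)\,ds\), and read off \(\varphi=t\psi\), \(\psi(0)=\varphi'(0)\), and \(H=\psi\) when \(\varphi'(0)=0\). You also give the justification for differentiating under the integral sign, which the paper leaves implicit.
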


\begin{proof}
For \(U\) sufficiently small, use the function \(\psi(t)=\int_0^1\varphi'(st)\,ds\).
Then we have \(\varphi(t)=t\psi(t)\) and \(\psi(0)=\varphi'(0)\).
The final statement is the same factorization in the case \(\psi(0)=0\).
\end{proof}

\mainjtheorem

\begin{proof}
By \Cref{prop:J-smooth}, the map \(\tilde J\) is well defined, linear, and smooth.
It remains to compute its kernel.

If \(f\in I^2\), then we can write
\[
  f=\sum_{j=1}^N g_jh_j
\]
with \(g_j,h_j\in I\).
Choose representatives of \(f,g_j,h_j\) on a common \(D\)-open neighborhood \(O\) of \(0\), so that the displayed equality holds on \(O\).
For any line \(L\subset\R^2\) through the origin, \Cref{lem:D-open} says that \(O\cap L\) is an ordinary open neighborhood of \(0\) in \(L\).
Thus the restrictions of \(g_j\) and \(h_j\) to \(O\cap L\) define ordinary smooth germs on \(L\), and these germs vanish at \(0\).
It follows that \(d_0((g_jh_j)|_L)=0\) for all \(j\), and hence that \(\tilde J(f)=0\).
This proves \(I^2\subset\ker\tilde J\).

Conversely, let \(f\in I\) and suppose that \(\tilde J(f)=0\).
Choose a representative \(F\colon O\to\R\) on a \(D\)-open neighborhood \(O\subset P\) of \(0\), with \(F(0)=0\).
Define functions \(A_1,A_2\colon O\to\R\) by
\[
  A_i(0)=0,\qquad
  A_i(x)=\frac{x_iF(x)}{x_1^2+x_2^2}\quad (x\neq0).
\]
Then the following factorization holds:
\[
  F=x_1A_1+x_2A_2.
\]
By \Cref{lem:pencil-smooth-test}, the coordinate functions \(x_1,x_2\colon P\to\R\) are smooth; since they vanish at \(0\), they lie in \(I\).
It is therefore enough to prove that \(A_1,A_2\) are smooth germs vanishing at \(0\).

By \Cref{lem:pencil-smooth-test}, it remains to check the restrictions of \(A_i\) to the punctured set and to each line through the origin.
On \(O^\times\), the functions \(A_i\) are ordinary smooth.
Fix \(w\in\R^2\setminus\{0\}\).
By \Cref{lem:D-open}, the intersection \(O\cap\R w\) is an ordinary neighborhood of \(0\) in \(\R w\).
Since the inclusion \(\R w\to P\) is a plot and \(F\) is smooth on \(O\), the function
\[
  \varphi_w(t)=F(tw)
\]
is ordinary smooth for \(t\) near \(0\).
It satisfies \(\varphi_w(0)=0\), and the assumption \(\tilde J(f)=0\) gives \(\varphi_w'(0)=0\).
By \Cref{lem:hadamard},
\[
  H_w(t)=
  \begin{cases}
    F(tw)/t, & t\neq0,\\
    0, & t=0
  \end{cases}
\]
is smooth near \(0\).
For \(t\) near \(0\), we have
\[
  A_i(tw)=\frac{w_i}{|w|^2}H_w(t).
\]
Therefore \(A_i|_{O\cap\R w}\) is ordinary smooth near \(0\), and away from \(0\) this restriction is ordinary smooth by the formula defining \(A_i\).
It follows that \(A_i\) is smooth, and hence that \(f\in I^2\).
This proves \(\ker\tilde J\subset I^2\); combined with the first inclusion, we get \(\ker\tilde J=I^2\).

The asserted map \(J\) is the map induced by \(\tilde J\) on the quotient \(E=I/I^2\).
It is smooth and linear because \(\tilde J\) is smooth and linear, and it is injective by the kernel computation.
\end{proof}

\section{The Image of the Derivative Map}

We now identify the underlying vector space \(\prod_{L\in\RP^1}L^*\) with a space of odd functions on \(S^1\).
Write \(\Odd(S^1)\) for the vector space of odd functions \(S^1\to\R\).
A family \(s=(s_L)_{L\in\RP^1}\) determines
\[
  \hat{s}\colon S^1\to\R,\qquad
  \hat{s}(v)=s_{\R v}(v).
\]
This function is odd, because replacing \(v\) by \(-v\) keeps the same line and changes the value of the corresponding linear functional by a sign.
Conversely, an odd function \(h\colon S^1\to\R\) determines a family of covectors \(s^h=(s^h_L)_{L\in\RP^1}\) by
\[
  s^h_L(tv)=t h(v)
  \qquad (v\in S^1\cap L,\ t\in\R).
\]
The oddness of \(h\) makes this independent of the choice of the unit vector \(v\in S^1\cap L\).
These two constructions are inverse to each other, and we use this vector-space identification throughout the section.
With this identification, the odd function associated to \(J([F])\) is the function
\[
  v\mapsto \left.\frac{d}{dt}\right|_{t=0}F(tv).
\]

\begin{defi}[\(\sigma\)-continuous functions]\label{def:sigma-continuous}
In this paper, a function \(h\colon S^1\to\R\) is called \emph{\(\sigma\)-continuous} if there is a sequence \((C_n)_{n\in\N}\) of closed subsets of \(S^1\) such that
\[
  S^1=\bigcup_{n\in\N}C_n
\]
and \(h|_{C_n}\) is continuous for every \(n\in\N\).
The same closed-cover condition is known as piecewise continuity in the literature around the Jayne--Rogers theorem \cite{Jayne-Rogers}, as \(\sigma\)-continuity with closed witnesses in \cite{Carroy-Miller}, and as \(\bar\sigma\)-continuity in the terminology of \cite{Banakh-sigma-continuous}.
The odd \(\sigma\)-continuous functions form a vector subspace of \(\Odd(S^1)\), denoted by \(\Odd_\sigma(S^1)\).
\end{defi}

\mainimagetheorem
This is an equality of vector subspaces of the odd functions on \(S^1\).
We prove the theorem by separating the two inclusions, stated as \Cref{prop:image-subset,prop:image-superset}.

\subsection{From Germs to \texorpdfstring{\(\boldsymbol{\sigma}\)}{sigma}-Continuous Functions}

\begin{prop}\label{prop:image-subset}
With the identification above, the following inclusion holds.
\[
  J(E)\subset\Odd_\sigma(S^1).
\]
\end{prop}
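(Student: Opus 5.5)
The plan is to realize the closed sets \(C_n\) of \Cref{def:sigma-continuous} as closures of sets of directions on which the difference quotients of \(F\) converge to \(h\) at a controlled rate. Fix \([F]\in E\) with a representative \(F\colon O\to\R\), where \(O\) is a \(D\)-open neighborhood of \(0\) and \(F(0)=0\). Let \(h(v)=\frac{d}{dt}\big|_{t=0}F(tv)\) be the associated odd function. For \(n\ge1\), let \(S_n\subset S^1\) be the set of \(v\) such that
\[
  [-1/n,1/n]\,v\subset O
  \quad\text{and}\quad
  \bigl|F(tv)/t-h(v)\bigr|\le n|t|
  \quad\text{for } 0<|t|\le 1/n .
\]
I set \(C_n=\overline{S_n}\), which is closed in \(S^1\).

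\emph{Covering.} Fix \(v\in S^1\). By \Cref{lem:D-open}, \(O\cap\R v\) is an ordinary neighborhood of \(0\) in \(\R v\), and \(t\mapsto F(tv)\) is ordinary smooth near \(0\). By \Cref{lem:hadamard}, \(F(tv)=t\psi_v(t)\) with \(\psi_v\) smooth and \(\psi_v(0)=h(v)\). Hence \(|\psi_v(t)-h(v)|\le M|t|\) on a small compact interval. Taking \(n\) larger than \(M\) and than the reciprocal of that interval's radius gives \(v\in S_n\). So \(\bigcup_n C_n=S^1\).

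\emph{Key continuity fact.} Fix \(w\in S^1\). The segment \(\{tw: 0<|t|\le\delta_w\}\) lies in \(O^\times\) for some \(\delta_w>0\). Since \(O^\times\) is ordinarily open and \(F|_{O^\times}\) is ordinary continuous by \Cref{lem:pencil-smooth-test}, for each fixed \(t\) with \(0<|t|\le\delta_w\) we have \(F(tu)/t\to F(tw)/t\) as \(u\to w\) in \(S^1\). This holds without any uniformity in \(t\). That matters, because \(O\) may pinch toward a line, as in \Cref{ex:D-open-not-usual-open}, and this pinching is exactly why I avoid any condition asking a whole neighborhood of a segment to lie in \(O\).

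\emph{Continuity on \(C_n\).} First I show the inequality passes to the closure. Let \(w\in C_n\) and \(u_k\in S_n\) with \(u_k\to w\). For fixed \(0<|t|\le\min(1/n,\delta_w)\), the key fact gives
\[
  \limsup_{k,l}|h(u_k)-h(u_l)|\le 2n|t|.
\]
Letting \(t\to0\) shows that \(h(u_k)\) converges to some limit \(\lambda\) with \(|\lambda-F(tw)/t|\le n|t|\). Since also \(F(tw)/t\to h(w)\) as \(t\to0\), we get \(\lambda=h(w)\). Thus every \(w\in C_n\) satisfies \(|h(w)-F(tw)/t|\le n|t|\) for \(0<|t|\le\min(1/n,\delta_w)\).

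Continuity then follows by the same \(\varepsilon/3\)-argument. Let \(w_k\to w\) in \(C_n\). Fix small \(t\) with \(0<|t|\le\min(1/n,\delta_w)\). For large \(k\), the point \(tw_k\) lies in \(O^\times\), and the bound at \(w_k\) holds for this \(t\); this needs a little care with \(\delta_{w_k}\), and I would handle it by running the estimate along \(S_n\)-approximants of \(w_k\) instead. This gives
\[
  \limsup_k|h(w_k)-h(w)|\le 2n|t|+\limsup_k\bigl|F(tw_k)/t-F(tw)/t\bigr|=2n|t|,
\]
and letting \(t\to0\) gives \(h(w_k)\to h(w)\).

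The main obstacle I expect is this bookkeeping around \(\delta_{w_k}\) versus \(\delta_w\) in the last step. The clean way out is to work with points of \(S_n\), where the full interval \([-1/n,1/n]\) is available, and to pass to closures only through the pointwise limits above. Oddness of \(h\) is automatic, so \(h\in\Odd_\sigma(S^1)\).
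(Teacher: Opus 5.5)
Your proof is correct and follows essentially the paper's strategy: closed sets on which the radial difference quotients $F(tv)/t$ approach $h(v)$ at a uniform linear rate, a covering argument via the Hadamard lemma, and continuity from an $\varepsilon/3$ argument at a fixed small radius using ordinary continuity of $F$ on $O^\times$. The only difference is in how closedness is obtained. The paper makes its sets closed by construction: it imposes the $h$-free condition $|H(r,v)-H(s,v)|\le M(r+s)$, where $H(r,v)=F(rv)/r$, and only for radii with $rv\in O$. You instead take closures of the $S_n$ and transfer the bound to limit points with your Cauchy argument. That argument in fact gives $|h(w)-F(tw)/t|\le n|t|$ for every $t$ with $0<|t|\le 1/n$ and $tw\in O$, not only for $|t|\le\delta_w$, so the $\delta_{w_k}$ bookkeeping you worried about disappears.
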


\begin{proof}
Let \([F]\in E\), where \(F\colon O\to\R\) is a smooth representative on a \(D\)-open neighborhood \(O\) of \(0\), with \(F(0)=0\).
The corresponding odd function \(h\colon S^1\to\R\) is defined by
\[
  h(v)=\left.\frac{d}{dt}\right|_{t=0}F(tv)
  \qquad (v\in S^1).
\]

We define \(\Omega\) and \(H\) by
\[
  \Omega=\{(r,v)\in(0,\infty)\times S^1\mid rv\in O\},
  \qquad
  H\colon\Omega\to\R,\quad H(r,v)=\frac{F(rv)}{r}.
\]
Since \(O^\times\) is ordinary open in \(\R^2\setminus\{0\}\), the set \(\Omega\) is ordinary open in \((0,\infty)\times S^1\).
Since \(F\) is ordinary smooth away from \(0\), the function \(H\) is smooth.
For each fixed \(v\), the function \(r\mapsto H(r,v)\) has limit \(h(v)\) as \(r\to0\) with \(r>0\).

For positive integers \(M,N\), define \(C_{M,N}\subset S^1\) by
\[
  C_{M,N}
  =
  \left\{
  v\in S^1\,\middle|\,
  \begin{array}{l}
    \text{for all }r,s\in(0,1/N)\text{ with }(r,v),(s,v)\in\Omega,\\
    |H(r,v)-H(s,v)|\le M(r+s)
  \end{array}
  \right\}.
\]

We first prove that each \(C_{M,N}\) is closed.
Let \(v_j\in C_{M,N}\) and suppose that \(v_j\to v\) in \(S^1\).
Take \(r,s\in(0,1/N)\) such that \((r,v),(s,v)\in\Omega\).
Since \(\Omega\) is open, there is \(j_0\) such that \((r,v_j),(s,v_j)\in\Omega\) for all \(j\ge j_0\).
For such \(j\), the definition of \(C_{M,N}\) gives
\[
  |H(r,v_j)-H(s,v_j)|\le M(r+s).
\]
Letting \(j\to\infty\) and using the continuity of \(H\), we obtain
\[
  |H(r,v)-H(s,v)|\le M(r+s).
\]
This shows that \(v\in C_{M,N}\).

We next show that the closed sets \(C_{M,N}\) cover \(S^1\).
Fix \(v\in S^1\).
It is enough to find positive integers \(M,N\) such that \(v\in C_{M,N}\).
By \Cref{lem:D-open}, there is \(\epsilon>0\) such that \(tv\in O\) whenever \(|t|<\epsilon\).
Choose \(N\) with \(1/N<\epsilon\).
For \(|r|<\epsilon\), define
\[
  G_v(r)=
  \begin{cases}
    F(rv)/r, & r\neq0,\\
    h(v), & r=0.
  \end{cases}
\]
By the one-dimensional Hadamard lemma, \(G_v\) is smooth.
Choose an integer \(M\) such that \(|G_v'(r)|\le M\) on \([0,1/N]\).
For \(r,s\in(0,1/N)\), the mean value theorem gives
\[
  |H(r,v)-H(s,v)|
  =
  |G_v(r)-G_v(s)|
  \le M|r-s|
  \le M(r+s).
\]
Hence we have \(v\in C_{M,N}\).

It remains to prove that \(h\) is continuous on each \(C_{M,N}\).
Fix \(C=C_{M,N}\), \(v_0\in C\), and \(\eta>0\).
Choose \(r_0>0\) such that \(r_0<1/N\), \((r_0,v_0)\in\Omega\), and \(Mr_0<\eta/3\).
By openness of \(\Omega\) and continuity of \(H\), there is a neighborhood \(U\) of \(v_0\) in \(S^1\) such that \((r_0,v)\in\Omega\) for \(v\in U\), and
\[
  |H(r_0,v)-H(r_0,v_0)|<\eta/3.
\]
For \(v\in U\cap C\), \Cref{lem:D-open} gives \(\delta_v>0\) such that \((s,v)\in\Omega\) whenever \(0<s<\delta_v\).
Applying the defining inequality for \(C\) with \(r=r_0\) gives
\[
  |H(r_0,v)-H(s,v)|\le M(r_0+s)
  \qquad (0<s<\min\{\delta_v,1/N\}).
\]
Letting \(s\to0\) through positive values gives
\[
  |H(r_0,v)-h(v)|\le Mr_0.
\]
The same argument with \(v=v_0\) gives
\[
  |H(r_0,v_0)-h(v_0)|\le Mr_0.
\]
The triangle inequality then gives
\[
  |h(v)-h(v_0)|\le Mr_0+\eta/3+Mr_0<\eta.
\]
So \(h|_C\) is continuous.
\end{proof}

\subsection{From \texorpdfstring{\(\boldsymbol{\sigma}\)}{sigma}-Continuous Functions to Germs}

\begin{prop}\label{prop:image-superset}
With the identification above, the following inclusion holds.
\[
  \Odd_\sigma(S^1)\subset J(E).
\]
\end{prop}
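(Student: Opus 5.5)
The plan is to build, for a given \(h\in\Odd_\sigma(S^1)\), a single function \(F\colon\R^2\to\R\) with \(F(0)=0\) and \(J([F])=h\). \(F\) will be ordinary smooth on \(\R^2\setminus\{0\}\), and along each line \(\R v\) it will differ from \(t\mapsto th(v)\) by a function that is flat at \(0\). By \Cref{lem:pencil-smooth-test}, such an \(F\) is smooth on \(P\), so its germ lies in \(I\). The odd function attached to \(J([F])\) is then \(v\mapsto\frac{d}{dt}\big|_{t=0}F(tv)=h(v)\).

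\emph{Step 1: normalize the cover.} Let \((C_n)\) be a closed cover as in \Cref{def:sigma-continuous}. Replace \(C_n\) by \(\bigcup_{k\le n}(C_k\cup -C_k)\). This is still closed, and \(h\) is still continuous on it: \(h\) is continuous on \(-C_k\) by oddness, and continuity on a finite union of closed sets follows from continuity on each piece (pasting lemma). So we may assume the \(C_n\) are increasing and symmetric. By Tietze, extend \(h|_{C_n}\) to a continuous \(g_n\) on \(S^1\), and replace \(g_n\) by \(\tfrac12(g_n(v)-g_n(-v))\). This keeps it equal to \(h\) on the symmetric set \(C_n\) and makes it odd.

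\emph{Step 2: smooth approximation.} Put \(\delta_n=2^{-n^2}\). By Stone--Weierstrass (odd trigonometric polynomials, or symmetrizing a smooth approximant), choose smooth odd \(\varphi_n\colon S^1\to\R\) with \(\sup|\varphi_n-g_n|<\delta_n\). In particular \(|\varphi_n(v)-h(v)|<\delta_n\) for \(v\in C_m\) and \(n\ge m\).

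\emph{Step 3: radial interpolation.} Choose a smooth partition of unity \((\chi_n)_{n\ge1}\) on \((0,\infty)\) with \(\supp\chi_n\subset(2^{-n-1},2^{-n+1})\) for \(n\ge2\), and \(\chi_1\) covering \([1,\infty)\). Arrange the derivative bounds \(|\chi_n^{(k)}|\le C_k2^{nk}\), for instance by rescaling one fixed bump. Define
\[
\Phi(r,v)=\sum_n\chi_n(r)\varphi_n(v),\qquad F(rv)=r\Phi(r,v)\ (r>0),\qquad F(0)=0.
\]
Locally this is a finite sum of smooth functions, so \(F\) is ordinary smooth on \(\R^2\setminus\{0\}\). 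Oddness of each \(\varphi_n\) gives \(F(-x)=-F(x)\), hence \(F(tv)=t\,\Phi(|t|,v)\) for all real \(t\).

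\emph{Step 4 (main obstacle): linewise smoothness.} Fix \(v\) and choose \(m\) with \(v\in C_m\). For \(r<2^{-m-1}\), only indices \(n>m\) contribute, and \(\sum\chi_n=1\), so
\[
\Phi(r,v)-h(v)=\sum_{n>m}\chi_n(r)\bigl(\varphi_n(v)-h(v)\bigr).
\]
At radius \(r\approx2^{-n}\) at most two terms are nonzero. Hence
\[
\Bigl|\partial_r^k\bigl(\Phi(r,v)-h(v)\bigr)\Bigr|\le 2C_k2^{(n+1)k}\delta_n .
\]
Since \(2^{nk}2^{-n^2}\) decays faster than any power of \(2^{-n}\approx r\), the function \(\Phi(\cdot,v)-h(v)\) and all its derivatives are \(O(r^\infty)\) as \(r\to0^+\). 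Therefore \(\rho(t)=\Phi(|t|,v)-h(v)\), with \(\rho(0)=0\), is smooth and flat at \(t=0\). Then \(F(tv)=th(v)+t\rho(t)\) is smooth in \(t\), with derivative \(h(v)\) at \(0\).

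\emph{Conclusion.} With smoothness on \(\R^2\setminus\{0\}\) and on each line, \Cref{lem:pencil-smooth-test} shows \(F\) is smooth on \(P\). By construction \(J([F])=h\), which proves \(\Odd_\sigma(S^1)\subset J(E)\). The delicate point is Step 4: the approximation errors \(\delta_n\) must beat the blow-up \(2^{nk}\) of the radial cutoff derivatives for every \(k\). Choosing \(\delta_n\) superexponentially small, as above, is what makes this work.
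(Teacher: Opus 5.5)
Your proposal is correct and follows essentially the same route as the paper. You symmetrize and nest the closed cover, Tietze-extend and approximate by smooth odd functions with superexponentially small errors, glue them radially with a scale-invariant partition of unity whose $k$-th derivatives are $O(r^{-k})$, and set $F(rv)=r\Phi(r,v)$, so that $F(tv)=th(v)+t\rho(t)$ with $\rho$ flat. The differences are only cosmetic: you use dyadic scales $2^{-n}$ and errors $2^{-n^2}$ in place of the paper's $e^{-m}$ and $e^{-m^2}$, and you spell out, via oddness and the pasting lemma, why $h$ stays continuous on the symmetrized sets, which the paper leaves implicit.
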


Before proving \Cref{prop:image-superset}, we look at a concrete instance of the construction.
This example realizes a discontinuous \(\sigma\)-continuous function by an explicit pencil-smooth function.

\begin{ex}\label{ex:discontinuous-sigma-continuous-profile}
Let \(e_1=(1,0)\), and define an odd function \(h\colon S^1\to\R\) by
\[
  h(v)=
  \begin{cases}
    1, & v=e_1,\\
    -1, & v=-e_1,\\
    0, & v\neq \pm e_1.
  \end{cases}
\]
This function is not continuous at \(\pm e_1\).
However, it is \(\sigma\)-continuous.
Indeed, it is continuous on the closed set \(\{\pm e_1\}\), and on each of the closed sets
\[
  \{v\in S^1\mid |v-e_1|\ge 1/n,\ |v+e_1|\ge 1/n\}
  \qquad (n\ge1).
\]
These closed sets cover \(S^1\).

We now realize this function explicitly.
Choose an even smooth function \(\rho\colon\R\to\R\) such that
\[
  \rho(0)=1,
  \qquad
  \rho(u)=0\quad(|u|\ge1).
\]
For instance, one may take
\[
  \rho(u)=
  \begin{cases}
    \exp\left(-\dfrac{u^2}{1-u^2}\right), & |u|<1,\\
    0, & |u|\ge1.
  \end{cases}
\]
Define \(F\colon B(0,1)\to\R\) by \(F(0)=0\), and for \((x,y)\neq(0,0)\),
\[
  F(x,y)=x\,\rho\left(\frac{y}{x^2+y^2}\right).
\]
This function is ordinary smooth on the punctured ball.
We compute its restriction to each line through the origin.
For \(w=(a,b)\neq0\) and \(t\neq0\), we have
\[
  F(ta,tb)
  =
  ta\,
  \rho\left(\frac{b}{t(a^2+b^2)}\right).
\]
If \(b=0\), this gives
\[
  F(ta,0)=ta.
\]
If \(b\neq0\), then for sufficiently small \(|t|\) the argument of \(\rho\) has absolute value at least \(1\), so that
\[
  F(ta,tb)=0
\]
near \(t=0\).
Thus every line restriction is smooth near the origin.
By \Cref{lem:pencil-smooth-test}, \(F\) is pencil-smooth near the origin.
For \(v\in S^1\), the linewise derivative function is
\[
  \left.\frac{d}{dt}\right|_{t=0}F(tv)
  =
  h(v).
\]
Such a linewise derivative function cannot arise from an ordinary smooth germ on \(\R^2\), since it would be induced by a single linear functional on \(\R^2\), and hence would be continuous on \(S^1\).
\end{ex}

\begin{proof}[Proof of \Cref{prop:image-superset}]

\begin{lemma}\label{lem:log-partition}
There exists a smooth partition of unity \((\lambda_m)_{m\ge2}\) on \((0,e^{-2})\) with the following properties.
\begin{enumerate}
  \item \(\sum_{m\ge2}\lambda_m(r)=1\) for every \(0<r<e^{-2}\).
  \item \(\supp(\lambda_m)\subset(e^{-(m+1)},e^{-(m-1)})\).
  \item for every integer \(k\ge0\), there is a constant \(R_k>0\) such that
  \[
    \left|\frac{d^k}{dr^k}\lambda_m(r)\right|\le R_k r^{-k}
  \]
  for all \(m\ge2\) and all \(r\in(0,e^{-2})\).
\end{enumerate}
\end{lemma}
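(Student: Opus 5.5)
The idea is to build the partition of unity in the logarithmic coordinate \(\tau=-\log r\). There, the intervals \((e^{-(m+1)},e^{-(m-1)})\) become the unit-spaced intervals \((m-1,m+1)\). A uniform partition of unity on \(\R\) obtained by integer translates of a single bump then pulls back to functions whose \(k\)-th \(r\)-derivatives scale like \(r^{-k}\).

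First, fix a smooth function \(\chi\colon\R\to[0,1]\) with \(\supp\chi\subset(-1,1)\) and \(\chi>0\) on \([-1/2,1/2]\). Put
\[
  \Sigma(\tau)=\sum_{m\in\Z}\chi(\tau-m).
\]
This sum is locally finite, smooth, periodic of period \(1\), and bounded below by a positive constant. Set \(\mu_m(\tau)=\chi(\tau-m)/\Sigma(\tau)\). Then \(\sum_{m\in\Z}\mu_m=1\) on \(\R\) and \(\supp\mu_m\subset(m-1,m+1)\). Moreover, \(\mu_m(\tau)=\mu_0(\tau-m)\) by periodicity of \(\Sigma\), so every derivative \(\mu_m^{(j)}\) is bounded by a constant \(B_j\) independent of \(m\).

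On \((0,e^{-2})\) we have \(\tau=-\log r>2\). Only indices \(m\ge2\) can have \(\mu_m(\tau)\ne0\) there, since \(\mu_m(\tau)\neq0\) requires \(\tau<m+1\) (at worst \(m\ge2\)). One subtlety: an index \(m\le1\) could still contribute for \(\tau\in(2,m+1)\), but \(m+1\le2\), so no such \(\tau\) exists. Hence we may define \(\lambda_m(r)=\mu_m(-\log r)\) for \(m\ge2\), and \(\sum_{m\ge2}\lambda_m=1\) on \((0,e^{-2})\). The support condition (2) follows directly from \(\supp\mu_m\subset(m-1,m+1)\).

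The remaining and main step is the derivative bound (3). By Faà di Bruno, or by a simple induction, \(\frac{d^k}{dr^k}\bigl(g(-\log r)\bigr)\) equals \(r^{-k}\sum_{j=1}^{k}c_{k,j}\,g^{(j)}(-\log r)\) for \(k\ge1\), with universal constants \(c_{k,j}\). The induction goes as follows. Differentiating \(r^{-k}g^{(j)}(-\log r)\) gives
\[
  -k\,r^{-k-1}g^{(j)}(-\log r)-r^{-k-1}g^{(j+1)}(-\log r),
\]
which preserves this form. Applying the formula to \(g=\mu_m\) and using the uniform bounds \(B_j\), we obtain the estimate with \(R_k=\sum_j|c_{k,j}|B_j\). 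For \(k=0\) we can take \(R_0=1\). The point to emphasize is uniformity in \(m\), which is exactly where translation invariance in the \(\tau\)-variable is used.
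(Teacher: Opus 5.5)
Your proposal is correct and follows essentially the same route as the paper: it passes to the logarithmic coordinate \(\tau=-\log r\), normalizes the integer translates of a single bump by a \(1\)-periodic positive sum, and uses the induction formula \(\frac{d^k}{dr^k}g(-\log r)=r^{-k}\sum_{j=1}^k c_{k,j}\,g^{(j)}(-\log r)\) together with translation-invariant derivative bounds to get uniformity in \(m\). Your \(\chi(\tau-m)/\Sigma(\tau)=\mu_0(\tau-m)\) is the paper's \(\varphi(\tau-m)\) with \(\varphi=\psi/S\), so the two proofs differ only cosmetically.
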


\begin{proof}
Choose a nonnegative function \(\psi\in C^\infty_c((-1,1))\) such that \(\psi(t)=1\) for \(|t|\le 1/2\).
Define \(S\colon\R\to\R\) by \(S(t)=\sum_{n\in\Z}\psi(t-n)\).
This sum is locally finite.
Moreover, \(S\) is smooth, \(1\)-periodic, and positive, since every \(t\in\R\) satisfies \(|t-n|\le 1/2\) for some \(n\in\Z\).
Define
\[
  \varphi(t)=\frac{\psi(t)}{S(t)}.
\]
Then we have \(\varphi\in C^\infty_c((-1,1))\) and \(\varphi\ge0\), and the periodicity of \(S\) gives
\[
  \sum_{m\in\Z}\varphi(t-m)=1.
\]
For \(m\ge2\), define
\[
  \mu_m(t)=\varphi(t-m)\qquad (t>2).
\]
If \(t>2\) and \(m\le1\), then we have \(t-m>1\), so that \(\varphi(t-m)=0\).
Therefore the partition identity
\[
  \sum_{m\ge2}\mu_m(t)=1\qquad (t>2)
\]
holds.
Also, we have $\supp(\mu_m)\subset(m-1,m+1)$.
Since the functions \(\mu_m\) are translates of the single function \(\varphi\), for every \(j\ge0\) there is a constant \(B_j\) such that
\[
  |\mu_m^{(j)}(t)|\le B_j
  \qquad (m\ge2,\ t>2).
\]

Now put \(t=-\log r\), and define
\[
  \lambda_m(r)=\mu_m(-\log r)=\varphi(-\log r-m)
  \qquad (0<r<e^{-2}).
\]
Since \(-\log r>2\), the identity \(\sum_{m\ge2}\mu_m(t)=1\) gives \(\sum_{m\ge2}\lambda_m(r)=1\).
Since \(\supp(\varphi)\) is a compact subset of \((-1,1)\), choose \(a<1\) such that \(\supp(\varphi)\subset[-a,a]\).
Then the following implications hold:
\[
  \lambda_m(r)\neq0
  \quad\Longrightarrow\quad
  -\log r\in[m-a,m+a]
  \quad\Longrightarrow\quad
  r\in[e^{-(m+a)},e^{-(m-a)}].
\]
Thus we obtain \(\supp(\lambda_m)\subset(e^{-(m+1)},e^{-(m-1)})\).
This also shows local finiteness.

It remains to prove the derivative estimate.
By induction on \(k\), there are constants \(c_{k,j}\), independent of \(m\), such that
\[
  \frac{d^k}{dr^k}\lambda_m(r)
  =
  r^{-k}\sum_{j=1}^k c_{k,j}\,\mu_m^{(j)}(-\log r)
  \qquad (k\ge1).
\]
The uniform bounds for the derivatives of the \(\mu_m\)'s therefore imply
\[
  \left|\frac{d^k}{dr^k}\lambda_m(r)\right|
  \le
  r^{-k}\sum_{j=1}^k |c_{k,j}|B_j
  \qquad (k\ge1).
\]
The case \(k=0\) follows from the boundedness of \(\varphi\).
Thus the required constants \(R_k\) exist.
\end{proof}

Let \(h\in\Odd_\sigma(S^1)\).
Choose closed sets \(K_n\subset S^1\) such that \(S^1=\bigcup_nK_n\) and \(h|_{K_n}\) is continuous.
Replace them by the symmetric increasing closed sets
\[
  C_m=\bigcup_{n\le m}(K_n\cup(-K_n)).
\]
Then we have \(C_m=-C_m\), \(C_m\subset C_{m+1}\), and \(\bigcup_m C_m=S^1\); moreover, \(h|_{C_m}\) is continuous.

For each \(m\), use the Tietze extension theorem to extend \(h|_{C_m}\) to a continuous function \(b_m\colon S^1\to\R\), and replace \(b_m\) by its odd part.
The resulting continuous odd function still agrees with \(h\) on \(C_m\).
By the standard density of \(C^\infty(S^1)\) in \(C(S^1)\), with the approximation chosen odd, we choose smooth odd functions \(\alpha_m\colon S^1\to\R\) satisfying
\[
  |\alpha_m(v)-h(v)|\le e^{-m^2}\qquad (v\in C_m).
\]

Use the partition of unity \((\lambda_m)_{m\ge2}\) from \Cref{lem:log-partition}.
For \(0<r<e^{-2}\) and \(v\in S^1\), set \(A(r,v)=\sum_{m\ge2}\lambda_m(r)\alpha_m(v)\).
The sum is locally finite, so \(A\) is smooth.
Since all \(\alpha_m\) are odd, the function \(A\) satisfies \(A(r,-v)=-A(r,v)\).

Fix \(v\in S^1\), and set
\[
  B_v(r)=A(r,v)-h(v).
\]
We prove that \(B_v\) is flat at \(0\), i.e., after extending \(B_v\) by \(B_v(0)=0\), the extension is smooth at \(0\) and all of its derivatives there are zero.
Since \((\lambda_m)_{m\ge2}\) is a partition of unity, for \(0<r<e^{-2}\) we have
\[
  B_v(r)=\sum_{m\ge2}\lambda_m(r)\bigl(\alpha_m(v)-h(v)\bigr).
\]
Choose \(m_0\) such that \(v\in C_m\) for all \(m\ge m_0\).

Fix integers \(k,N\ge0\), and put \(\lambda_m^{(0)}=\lambda_m\).
For \(0<r<e^{-2}\), define
\[
  M_k(r)=\{m\ge2\mid \lambda_m^{(k)}(r)\neq0\}.
\]
If \(m\in M_k(r)\), then we have \(r\in\supp(\lambda_m^{(k)})\subset\supp(\lambda_m)\), and the support condition in \Cref{lem:log-partition} gives \(e^{-(m+1)}<r<e^{-(m-1)}\).
Equivalently, we have \(\log(1/r)-1<m<\log(1/r)+1\), so \(M_k(r)\) has at most three elements.

Choose \(m_1\ge \max\{m_0,2\}\) so large that
\[
  3R_k e^{-m^2}\le e^{-(N+k)(m+1)}
  \qquad (m\ge m_1).
\]
This is possible because \(m^2-(N+k)(m+1)\to\infty\).
Let \(r_0=e^{-(m_1+1)}\).
If \(0<r<r_0\) and \(m\in M_k(r)\), then the left inequality above gives \(m>m_1\).
Hence we have \(v\in C_m\) and
\[
  |\alpha_m(v)-h(v)|\le e^{-m^2}.
\]
Moreover, since \(e^{-(m+1)}<r\), the choice of \(m_1\) gives
\[
  3R_k e^{-m^2}\le e^{-(N+k)(m+1)}\le r^{N+k}.
\]

By local finiteness and the derivative estimate in \Cref{lem:log-partition}, for \(0<r<r_0\) we have
\[
  \left|\frac{d^k}{dr^k}B_v(r)\right|
  \le
  r^{-k}\sum_{m\in M_k(r)} R_k e^{-m^2}
  \le r^{-k}r^{N+k}=r^N.
\]
Hence, the extension \(B_v(0)=0\) is smooth at \(0\) and all of its derivatives at \(0\) vanish.

Define \(F\colon B(0,e^{-2})\to\R\) by
\[
  F(0)=0,\qquad
  F(rv)=rA(r,v)\quad (0<r<e^{-2},\ v\in S^1).
\]
Here we write \(r=|x|\) and \(v=x/|x|\), so this is an ordinary polar expression away from \(0\).
It is ordinary smooth on the punctured ball.
By \Cref{lem:pencil-smooth-test}, it remains to check the restriction of \(F\) to each line through the origin.
Fix \(w\in S^1\).
For \(t\neq0\), we have
\[
  F(tw)=t h(w)+t B_w(|t|),
\]
where the oddness of \(A\) is used for \(t<0\).
Since \(B_w\) is flat at \(0\), the function \(t\mapsto B_w(|t|)\), with value \(0\) at \(t=0\), is smooth.
Therefore \(t\mapsto F(tw)\) is smooth near \(0\).
Thus the restriction of \(F\) to each line through the origin is ordinary smooth, so \(F\) is smooth as a function on \(P\).

Thus \(F\) represents an element of \(I\).
For every \(v\in S^1\), its linewise derivative satisfies
\[
  \left.\frac{d}{dt}\right|_{t=0}F(tv)=h(v).
\]
Hence we have \(h\in J(E)\).
\end{proof}

By \Cref{thm:main-J,thm:main-image}, the map \(J\) identifies \(E=E_0(P)\) with \(\Odd_\sigma(S^1)\) as vector spaces.
In the rest of this section, whenever smoothness of maps to or from \(\Odd_\sigma(S^1)\) is discussed, \(\Odd_\sigma(S^1)\) carries the diffeology transported from \(E_0(P)\) via this identification.
Thus a parametrization \(q\colon U\to\Odd_\sigma(S^1)\) is a plot precisely when it is locally the linewise first derivative of a smooth family of pencil germs.
Equivalently, locally on \(U\), there exist a \(D\)-open neighborhood \(O\subset P\) of \(0\) and a smooth function
\[
  F\colon U_0\times O\to\R
\]
such that \(F(u,0)=0\) and
\[
  q(u)(v)
  =
  \left.\frac{d}{dt}\right|_{t=0}F(u,tv)
  \qquad (u\in U_0,\ v\in S^1).
\]
Indeed, a plot of \(E=I/I^2\) locally lifts to a plot of \(I\), represented
by such a family \(F\) as in the proof of \Cref{prop:J-smooth}; conversely,
every such family defines a plot of \(E\).
We use this identification without changing notation.

\begin{thm}\label{thm:smooth-profile-approximation}
For every \(h\in\Odd_\sigma(S^1)\), there exist \(\epsilon>0\) and a plot
\[
  c\colon(-\epsilon,\epsilon)\to\Odd_\sigma(S^1)
\]
such that \(c(0)=h\) and \(c(s)\in C^\infty_{\mathrm{odd}}(S^1)\) for every \(s\neq0\).
Consequently, \(C^\infty_{\mathrm{odd}}(S^1)\) is dense in \(\Odd_\sigma(S^1)\) with respect to the \(D\)-topology.
\end{thm}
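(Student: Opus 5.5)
Starting from the realization in the proof of \Cref{prop:image-superset}, we already have a smooth odd function \(A(r,v)=\sum_{m\ge2}\lambda_m(r)\alpha_m(v)\) on \((0,e^{-2})\times S^1\) with \(F(rv)=rA(r,v)\) representing \(h\). The idea is to let a parameter \(s\) act as a radial rescaling. I would set \(c(s)(v)=A(|s|,v)\) for \(s\neq0\) and \(c(0)=h\). Each \(c(s)\) with \(s\ne0\) is a finite combination of the smooth odd \(\alpha_m\), hence lies in \(C^\infty_{\mathrm{odd}}(S^1)\). It remains to show that \(c\) is a plot, i.e.\ to produce a smooth family \(G\colon(-\epsilon,\epsilon)\times O\to\R\) with \(G(s,0)=0\) and linewise derivative \(c(s)\).

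The natural candidate is \(G(s,x)=|x|\,A\bigl(\sqrt{s^2+|x|^2},\,x/|x|\bigr)\) for \(x\neq0\) and \(G(s,0)=0\). Its linewise derivative at \(0\) in direction \(v\) is \(A(|s|,v)\) for \(s\ne0\), and \(h(v)\) for \(s=0\) by the flatness computation. To avoid \(|s|\), \(\sqrt{\cdot}\) at the problematic locus, I would instead write \(G(s,tv)=t\,\tilde A(s^2+t^2,v)\), where \(\tilde A(\rho,v)=A(\sqrt\rho,v)\). Then \(G(s,tv)=t h(v)+t\,\tilde B_v(s^2+t^2)\) with \(\tilde B_v(\rho)=B_v(\sqrt\rho)\), which is flat at \(\rho=0\) because \(B_v\) is flat. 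Oddness of \(A\) in \(v\) makes this consistent for \(t<0\).

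Smoothness of \(G\colon(-\epsilon,\epsilon)\times O\to\R\) as a diffeological map would be checked via the analogue of \Cref{lem:pencil-smooth-test} for products \(U\times P\), whose plots at points with \(x=0\) locally factor through \(U\times L\). There are two parts.
\begin{enumerate}
  \item Away from \(x=0\): \(G\) is a composite of ordinary smooth maps wherever \(s^2+|x|^2>0\), hence ordinary smooth on \((-\epsilon,\epsilon)\times O^\times\).
  \item Along \((-\epsilon,\epsilon)\times L\) with \(L=\R w\): the formula above shows that \((s,t)\mapsto t h(w)+t\tilde B_w(s^2+t^2)\) is smooth, since \(\tilde B_w\) is smooth on \([0,\infty)\) near \(0\) with all derivatives vanishing there. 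At points \((s,0)\) with \(s\ne0\) it is plainly smooth.
\end{enumerate}
A plot \(p=(p_1,p_2)\) of the product with \(p_2(u_0)=0\) has \(p_2\) locally inside a line, so \(G\circ p\) is smooth.

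The main obstacle is justifying the flatness of \(\tilde B_w(\rho)=B_w(\sqrt\rho)\) at \(\rho=0^+\), since composing with a square root can destroy smoothness. I would rerun the estimate from the proof of \Cref{prop:image-superset}: the bound \(|B_w^{(k)}(r)|\le r^N\) for all \(N\), combined with Faà di Bruno and the derivatives of \(\sqrt\rho\) (bounded by powers \(\rho^{1/2-j}\)), gives \(|\tilde B_w^{(k)}(\rho)|\le C\rho^{N'}\) for arbitrary \(N'\). This yields a flat smooth extension, and composing with \(s^2+t^2\) gives a smooth function.

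The density statement then follows formally. If \(O'\subset\Odd_\sigma(S^1)\) is \(D\)-open and contains \(h\), then \(c^{-1}(O')\) is an open neighborhood of \(0\), so it contains some \(s\ne0\), and \(c(s)\in C^\infty_{\mathrm{odd}}(S^1)\cap O'\).
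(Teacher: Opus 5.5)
Your proposal is correct and follows essentially the same route as the paper: the same family $|x|\,A\bigl(\sqrt{s^2+|x|^2},x/|x|\bigr)$, the same splitting $t\,h(v)+t\,B_v\bigl(\sqrt{s^2+t^2}\bigr)$ with flatness of $B_v(\sqrt{\cdot})$ extended by zero, and the same density argument via $D$-continuity of plots. The one detail to state explicitly is the choice of $\epsilon$ and $O=B(0,\delta)$ with $\sqrt{\epsilon^2+\delta^2}<e^{-2}$, so that $A$ is only evaluated inside its domain.
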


\begin{proof}
Fix \(h\in\Odd_\sigma(S^1)\).
Let
\[
  A\colon(0,e^{-2})\times S^1\to\R
\]
be the smooth function constructed in the proof of \Cref{prop:image-superset}.
For every \(r\), the function \(A(r,-)\) is smooth and odd.
For every \(v\in S^1\),
\[
  B_v(r)=A(r,v)-h(v)
\]
extends to a smooth function at \(r=0\), by setting \(B_v(0)=0\), which is flat at \(0\).

Choose \(\epsilon,\delta>0\) such that \(\sqrt{\epsilon^2+\delta^2}<e^{-2}\), and let \(B(0,\delta)\subset P\) denote the Euclidean open ball with the subset diffeology.
This set is \(D\)-open in \(P\), for instance by \Cref{lem:D-open}.
Define \(\mathcal F\colon(-\epsilon,\epsilon)\times B(0,\delta)\to\R\) by
\[
  \mathcal F(s,0)=0,
  \qquad
  \mathcal F(s,rv)=rA\!\left(\sqrt{s^2+r^2},v\right)
  \quad (0<r<\delta,\ v\in S^1).
\]
This function is ordinary smooth away from the origin of \(P\).
Along a plot through the origin, write the \(P\)-component locally as \(\tau v\), where \(v\in S^1\) is fixed and \(\tau\) is ordinary smooth.
Then we have \(\mathcal F(s,\tau v)=\tau h(v)+\tau B_v\!\left(\sqrt{s^2+\tau^2}\right)\).
The first term is smooth.
For the second term, define
\[
  g_v(u)=
  \begin{cases}
    B_v(\sqrt u), & u\ge0,\\
    0, & u<0.
  \end{cases}
\]
For \(u>0\), each derivative of \(B_v(\sqrt u)\) is a finite sum of terms of the form \(u^{-a/2}B_v^{(\ell)}(\sqrt u)\), and flatness forces every such term to tend to \(0\) at \(u=0\), so \(g_v\) is smooth near \(0\).
Hence \(B_v\!\left(\sqrt{s^2+\tau^2}\right)=g_v(s^2+\tau^2)\) is smooth near \((0,0)\), and multiplying by \(\tau\) preserves smoothness.

Under the above identification, the plot \(c\colon(-\epsilon,\epsilon)\to\Odd_\sigma(S^1)\) represented by \(\mathcal F\) is given by
\[
  c(s)(v)
  =
  \begin{cases}
    A(|s|,v), & s\neq0,\\
    h(v), & s=0.
  \end{cases}
\]
This formula gives \(c(0)=h\) and \(c(s)\in C^\infty_{\mathrm{odd}}(S^1)\) for every \(s\neq0\).
Since every plot is continuous for the \(D\)-topology, we have \(c(s)\to h\) as \(s\to0\), so \(h\) lies in the closure of \(C^\infty_{\mathrm{odd}}(S^1)\), which proves the density statement.
\end{proof}

We now turn to the external tangent space of \(P\) at the origin.
By \Cref{prop:external-right-Ix}, its computation reduces to describing the
smooth dual of \(E_0(P)\cong\Odd_\sigma(S^1)\).

\tangentcomputationtheorem

\begin{proof}
First, each evaluation \(\ev_v\) is smooth.
Indeed, if \(q\colon U\to\Odd_\sigma(S^1)\) is a plot represented locally by \(F\colon U_0\times O\to\R\), then
\[
  u\mapsto q(u)(v)=\left.\frac{d}{dt}\right|_{t=0}F(u,tv)
\]
is smooth.
Thus every finite linear combination of evaluations is smooth.

We prove the converse.
Let \(\Lambda\colon\Odd_\sigma(S^1)\to\R\) be a smooth linear functional.
Let \(C^\infty_{\mathrm{odd}}(S^1)\) carry the subset diffeology from the functional diffeology on \(C^\infty(S^1,\R)\).
The natural inclusion \(C^\infty_{\mathrm{odd}}(S^1)\to\Odd_\sigma(S^1)\) is smooth.
To see this, let \(q\colon U\to C^\infty_{\mathrm{odd}}(S^1)\) be a plot, and write
\(q^\vee\colon U\times S^1\to\R\), given by \(q^\vee(u,v)=q(u)(v)\), for its smooth adjoint.
Define
\[
  \tilde q(u,0)=0,
  \qquad
  \tilde q(u,rv)=r q^\vee(u,v)
  \quad (r>0,\ v\in S^1).
\]
This is smooth as a map \(U\times P\to\R\).
In fact, away from \(0\in P\), this is ordinary smooth.
Along a plot through \(0\), the \(P\)-component has the form \(\tau v\) for a fixed \(v\in S^1\), and the composite becomes \(\tau q^\vee(u,v)\), which is smooth.
The linewise first derivative of this family is \(q\), so \(q\) is also a plot of \(\Odd_\sigma(S^1)\).

We write \(T_{\mathrm{odd}}=\Lambda|_{C^\infty_{\mathrm{odd}}(S^1)}\) for the restriction of \(\Lambda\) to \(C^\infty_{\mathrm{odd}}(S^1)\).
The odd projection
\[
  P_{\mathrm{odd}}\colon C^\infty(S^1)\to C^\infty_{\mathrm{odd}}(S^1),
  \qquad
  (P_{\mathrm{odd}}\varphi)(v)=\frac{\varphi(v)-\varphi(-v)}{2},
\]
is smooth for the functional diffeologies.
Hence \(T(\varphi)=T_{\mathrm{odd}}(P_{\mathrm{odd}}\varphi)\) defines a smooth linear functional on \(C^\infty(S^1)\), and is therefore continuous for the \(D\)-topology.
By \cite[Corollary~4.10]{CSW-D-topology}, the \(D\)-topology on
\(C^\infty(S^1)\) agrees with the weak \(C^\infty\)-topology, which is
known to coincide with the usual Fr\'echet topology when the domain is
compact.
Thus \(T\) is continuous for the Fr\'echet topology, and hence it is a distribution on \(S^1\).

We next show that \(\supp T\) is finite.
By \cite[Proposition~3.2]{taho-nonsmooth}, every smooth linear functional on \(V=\prod_{n\in\N}\R\), equipped with the product diffeology, depends on finitely many coordinates.
Assume now that \(\supp T\) is infinite.
Then \(\supp T\) contains infinitely many antipodal orbits.
Choose a sequence of distinct such orbits converging to an orbit outside the sequence, and choose pairwise disjoint antipodally symmetric open neighborhoods \(W_n=-W_n\), each meeting \(\supp T\).
For each \(n\), choose \(\varphi_n\in C_c^\infty(W_n)\) such that \(T(\varphi_n)\neq0\), and set \(\beta_n=P_{\mathrm{odd}}\varphi_n\).
After rescaling, we may assume that \(T_{\mathrm{odd}}(\beta_n)=1\) for every \(n\in\N\).
The supports of the \(\beta_n\)'s are pairwise disjoint.

Choose an even smooth function \(\rho\colon\R\to[0,1]\) such that \(\rho=1\) on \([-1/2,1/2]\) and \(\supp(\rho)\subset(-1,1)\), and choose positive numbers \(\epsilon_n\to0\).
For \(a=(a_n)\in V\), define \(F_a\colon P\to\R\) by
\[
  F_a(0)=0,
  \qquad
  F_a(rv)
  =
  r\sum_{n=1}^{\infty}
  a_n\rho(r/\epsilon_n)\beta_n(v)
  \quad (r>0,\ v\in S^1).
\]
This is well defined and smooth for the pencil diffeology.
Away from the origin the sum is locally finite, because \(\epsilon_n\to0\).
Along any fixed line through the origin, the disjointness of the supports of the \(\beta_n\)'s leaves at most one nonzero summand.
Define the linear map
\[
  \Psi\colon V\to\Odd_\sigma(S^1),
  \qquad
  \Psi(a)=
  \left[
    v\mapsto
    \left.\frac{d}{dt}\right|_{t=0}F_a(tv)
  \right],
\]
and let \(p\colon U\to V\) be a plot, with \(p(u)=(p_n(u))_n\).
The map \((u,x)\mapsto F_{p(u)}(x)\) from \(U\times P\) to \(\R\) is smooth.
Indeed, away from the origin of \(P\), the defining sum is locally finite, while along a plot through the origin whose \(P\)-component is \(\tau w\), it becomes
\[
  \tau\sum_{n=1}^{\infty}p_n(u)\rho(\tau/\epsilon_n)\beta_n(w),
\]
using the evenness of \(\rho\) and the oddness of the \(\beta_n\)'s.
This sum has at most one nonzero term and is therefore smooth.
Its linewise first derivative is \(\Psi\circ p\), so \(\Psi\circ p\) is a plot and \(\Psi\) is smooth.
In the present identification, we have \(\Psi(a)=\sum_{n=1}^{\infty}a_n\beta_n\) pointwise, and in particular we have \(\Psi(e_n)=\beta_n\).
Thus, for every \(n\in\N\), we have \((\Lambda\circ\Psi)(e_n)=T_{\mathrm{odd}}(\beta_n)=1\).
This contradicts the finite-coordinate property for smooth linear functionals on \(V\).
Hence \(\supp T\) is finite.

Put \(K=\supp T\cup(-\supp T)\), and choose representatives
\(p_1,\ldots,p_N\) for the antipodal pairs in \(K\). For each \(i\), choose
an open arc \(W_i\) containing \(p_i\) such that the \(2N\) sets
\(W_1,-W_1,\ldots,W_N,-W_N\) are pairwise disjoint and
\(K\cap(W_i\cup(-W_i))=\{p_i,-p_i\}\). Choose a local coordinate
\(\theta_i\) on \(W_i\) with \(\theta_i(p_i)=0\).

For \(\psi\in C_c^\infty(W_i)\), let \(S_i\psi\in C^\infty_{\mathrm{odd}}(S^1)\)
be the odd extension, equal to \(\psi\) on \(W_i\), to \(-\psi\circ(-\id)\)
on \(-W_i\), and to \(0\) outside \(W_i\cup(-W_i)\). Set
\(T_i(\psi)=T_{\mathrm{odd}}(S_i\psi)=T(S_i\psi)\). Equip
\(C_c^\infty(W_i)\) with its usual test-function topology. The odd-extension
map \(S_i:C_c^\infty(W_i)\to C^\infty(S^1)\) is continuous into the usual
Fr\'echet topology. Hence \(T_i=T\circ S_i\) is a distribution on \(W_i\).
Its support is contained in \(\{p_i\}\).
Indeed, if \(\psi\) is supported away from \(p_i\), then \(S_i\psi\) is supported
away from \(K\), hence away from \(\supp T\), and therefore \(T_i(\psi)=0\).

The structure theorem for distributions supported at a point
\cite[Section~2.3]{HormanderI} yields an integer \(m_i\ge0\) and coefficients
\(A_{i,j}\in\R\) such that
\[
  T_i(\psi)=\sum_{j=0}^{m_i}A_{i,j}\,\partial_{\theta_i}^j\psi(p_i)
  \qquad(\psi\in C_c^\infty(W_i)).
\]
We claim that all positive-order coefficients vanish. Suppose not, and let
\(m=\max\{j\ge1\mid A_{i,j}\neq0\text{ for some }i\}\). Fix any index \(i\)
such that \(A_{i,m}\neq0\), and write \(p=p_i\), \(W=W_i\), and
\(\theta=\theta_i\).
Let \(\chi\in C_c^\infty((-1,1))\) be equal to \(1\) near \(0\), and put
\(\varphi(x)=x^m\chi(x)\). Choose \(a_0>0\) such that
\(a\supp\varphi\Subset\theta(W)\) for every \(0<a<a_0\). For such \(a\),
define \(\psi_a\in C_c^\infty(W)\) by
\(\psi_a(v)=\varphi(\theta(v)/a)\), and set \(\beta_a=S_i\psi_a\). Since
\(\varphi^{(j)}(0)=0\) for \(0\le j<m\) and \(\varphi^{(m)}(0)=m!\), the
local formula for \(T_i\) gives
\[
  T_{\mathrm{odd}}(\beta_a)=T_i(\psi_a)
  =A_{i,m}m!\,a^{-m}.
\]
Let \(C=A_{i,m}m!\), and note that \(C\neq0\).

Choose \(\epsilon,\delta>0\) such that
\(\sqrt{\epsilon^2+\delta^2}<a_0\).
Let \(B(0,\delta)\subset P\) be the Euclidean open ball with the subset
diffeology. Define \(\mathcal G:(-\epsilon,\epsilon)\times B(0,\delta)\to\R\)
by
\[
  \mathcal G(s,0)=0,\qquad
  \mathcal G(s,x)=|x|\,\beta_{\sqrt{s^2+|x|^2}}\!\left(\frac{x}{|x|}\right)
  \quad(0<|x|<\delta).
\]
This map is smooth. Away from \(0\in P\), this is ordinary smooth. Along a
plot through the origin contained in a fixed line \(\R w\), it has the form
\(t\,\beta_{\sqrt{s^2+t^2}}(w)\). If \(w\neq\pm p\), this expression is
identically zero near \((s,t)=(0,0)\), because \(w\) is outside
\(\supp\beta_a\) for all sufficiently small \(a\). If \(w=\pm p\), it is
\(\pm t\,\varphi(0)=0\), since \(m\ge1\).

Taking the linewise first derivative of the smooth family \(\mathcal G\) gives
a plot \(\gamma:(-\epsilon,\epsilon)\to\Odd_\sigma(S^1)\), defined by
\[
  \gamma(s)(v)=\left.\frac{d}{dt}\right|_{t=0}\mathcal G(s,tv)
  \qquad(v\in S^1).
\]
For \(s\neq0\), the oddness of \(\beta_a\) gives
\(\mathcal G(s,tv)=t\,\beta_{\sqrt{s^2+t^2}}(v)\) for \(t\) near \(0\).
Differentiating at \(t=0\) yields
\(\gamma(s)(v)=\beta_{|s|}(v)\) for every \(v\in S^1\), which proves the
identity \(\gamma(s)=\beta_{|s|}\). The formula for
\(T_{\mathrm{odd}}(\beta_a)\) now gives
\[
  (\Lambda\circ\gamma)(s)=C|s|^{-m}
  \qquad(s\neq0).
\]
The right-hand side is unbounded as \(s\to0\), contradicting the smoothness
of \(\Lambda\circ\gamma\). Hence
\(A_{i,j}=0\) for all \(i\) and
all \(j\ge1\).

Put \(A_i=A_{i,0}\). Choose \(\zeta_i\in C_c^\infty(W_i)\) equal to \(1\) on
a neighborhood of \(p_i\). For \(\alpha\in C^\infty_{\mathrm{odd}}(S^1)\), set
\(\alpha_K=\sum_i S_i(\zeta_i\,\alpha|_{W_i})\). The function
\(\alpha-\alpha_K\) vanishes near \(K\), so we get
\(T(\alpha-\alpha_K)=0\). Hence the restriction of \(\Lambda\) to smooth odd
functions is given by
\[
  \Lambda(\alpha)=T_{\mathrm{odd}}(\alpha)
  =\sum_{i=1}^N T_i(\zeta_i\,\alpha|_{W_i})
  =\sum_{i=1}^N A_i \alpha(p_i)
  \qquad(\alpha\in C^\infty_{\mathrm{odd}}(S^1)).
\]

It remains to pass from smooth odd functions to all of \(\Odd_\sigma(S^1)\).
Set \(M=\Lambda-\sum_i A_i\ev_{p_i}\). This is a smooth linear functional and
vanishes on \(C^\infty_{\mathrm{odd}}(S^1)\). Given
\(h\in\Odd_\sigma(S^1)\), choose the plot \(c\) from
\Cref{thm:smooth-profile-approximation}. Then we have \(c(0)=h\), while
\(c(s)\in C^\infty_{\mathrm{odd}}(S^1)\) for \(s\neq0\). The smooth function
\(M\circ c\) vanishes on the punctured interval, so that \(M(h)=0\). Thus
\(\Lambda\) is the finite linear combination \(\sum_i A_i\ev_{p_i}\) on all
of \(\Odd_\sigma(S^1)\). This proves the smooth-dual formula.

By \Cref{prop:external-right-Ix}, the smooth-dual formula above identifies
the external tangent space with
\(\operatorname{span}\{\ev_v\mid v\in S^1\}\). For each
\(L\in\RP^1\), the rule
\[
  L\longrightarrow
  \operatorname{span}\{\ev_v\mid v\in S^1\cap L\},
  \qquad tv\mapsto t\ev_v
  \quad(v\in S^1\cap L),
\]
is a well-defined isomorphism, since replacing \(v\) by \(-v\) changes both
\(t\) and \(\ev_v\) by a sign. The images for distinct lines are linearly
independent because smooth odd bump functions separate any finite set of
antipodal pairs. Taking their direct sum gives the asserted vector-space isomorphism
\(\external{0}{P}\cong\bigoplus_{L\in\RP^1}L\), completing the proof.
\end{proof}

\begin{cor}\label{cor:tangent-separation}
Under the canonical inclusion of \Cref{prop:external-right-Ix}, the external
tangent space is a proper subspace of the right tangent space:
\[
  \external{0}{P}\subsetneq\righttangent{0}{P}.
\]
\end{cor}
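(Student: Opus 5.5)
By \Cref{prop:external-right-Ix}, the external tangent space \(\external{0}{P}\) is identified with the smooth dual \(\Hom_{\R}^{\infty}(E,\R)\), and the right tangent space \(\righttangent{0}{P}\) with the full algebraic dual \(\Hom_{\R}(E,\R)\). The canonical inclusion \(\external{0}{P}\subset\righttangent{0}{P}\) corresponds to the inclusion of the smooth dual into the algebraic dual. So the plan is to exhibit a single \(\R\)-linear functional on \(E\cong\Odd_\sigma(S^1)\) that is not smooth. By \Cref{thm:tangent-computation}, this amounts to finding a linear functional that is not a finite linear combination of evaluations \(\ev_v\).

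The simplest choice is to compare dimensions, or rather to use a subspace on which the evaluations are inadequate. First, \(W=\operatorname{span}\{\ev_v\mid v\in S^1\}\) restricted to \(\Odd_\sigma(S^1)\) has a well-understood structure: by the last paragraph of the proof of \Cref{thm:tangent-computation}, the evaluations at representatives of distinct antipodal pairs are linearly independent. Consider the odd functions \(\delta_L\in\Odd_\sigma(S^1)\), one for each \(L\in\RP^1\), defined by \(\delta_L(\pm v_L)=\pm1\) and \(\delta_L=0\) elsewhere. These are \(\sigma\)-continuous, exactly as in \Cref{ex:discontinuous-sigma-continuous-profile}, using a closed point pair together with the complements of shrinking neighborhoods of it. The family \(\{\delta_L\}\) is linearly independent, and \(\ev_{v_{L'}}(\delta_L)\) is the Kronecker delta \(\delta_{LL'}\).

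Extend \(\{\delta_L\}\) to a Hamel basis of \(\Odd_\sigma(S^1)\), and define \(\Lambda\) to be \(1\) on every \(\delta_L\) and \(0\) on the remaining basis vectors. Suppose \(\Lambda=\sum_{i=1}^N A_i\ev_{p_i}\). Choose a line \(L\) not among the lines \(\R p_i\). Then \(\Lambda(\delta_L)=\sum_i A_i\delta_L(p_i)=0\), contradicting \(\Lambda(\delta_L)=1\). So \(\Lambda\) is linear but not smooth, and the inclusion is proper.

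There is no serious obstacle beyond checking that the identification in \Cref{prop:external-right-Ix} is compatible with the inclusion, which holds by naturality, and that each \(\delta_L\) is \(\sigma\)-continuous. The use of a Hamel basis requires the axiom of choice. One could avoid it by noting instead that \(\dim\Hom_\R(E,\R)\ge 2^{\mathfrak c}\) while \(\bigoplus_L L\) has dimension \(\mathfrak c\), but the explicit basis argument is cleaner.
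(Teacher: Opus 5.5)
Your argument is correct and is essentially the paper's. The paper uses only the countable family $f_n$ supported on $\{\pm v_n\}$ with $v_n=(\cos(1/n),\sin(1/n))$, rather than all the $\delta_L$, but it likewise extends that family to a Hamel basis, sets $\Lambda=1$ on it, and notes that any finite combination of evaluations vanishes on all but finitely many members. One correction to your closing aside: the dimension count does not avoid the axiom of choice, since speaking of $\dim\Hom_\R(E,\R)$ and bounding it below by $2^{\mathfrak c}$ already requires bases and extensions of functionals from $\operatorname{span}\{\delta_L\}$ to $E$; both routes use choice.
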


\begin{proof}
By \Cref{prop:external-right-Ix}, the right tangent space is the full
algebraic dual of \(\Odd_\sigma(S^1)\), whereas
\Cref{thm:tangent-computation} identifies the external tangent space with
the finite linear combinations of evaluations. It therefore suffices to
construct an algebraic linear functional that is not such a combination.
For \(n\ge1\), set \(v_n=(\cos(1/n),\sin(1/n))\), and let \(f_n\) be the
odd function supported on \(\{\pm v_n\}\) with \(f_n(v_n)=1\). By the same
argument as in \Cref{ex:discontinuous-sigma-continuous-profile}, each \(f_n\)
belongs to \(\Odd_\sigma(S^1)\). Their supports are pairwise disjoint, so the
family \(\{f_n\}\) is linearly independent. Extend this family to a basis of
the underlying vector space \(\Odd_\sigma(S^1)\), and define
\[
  \Lambda\in\Hom_{\R}\bigl(\Odd_\sigma(S^1),\R\bigr)
\]
by setting \(\Lambda(f_n)=1\) for every \(n\) and \(0\) on the
remaining basis elements. Every finite linear combination of evaluations
vanishes on all but finitely many \(f_n\), whereas
\(\Lambda(f_n)=1\) for every \(n\). Hence \(\Lambda\) is not a finite linear
combination of evaluations, proving the corollary.
\end{proof}

\section*{Acknowledgments}

I would like to express my sincere gratitude to my supervisor, Takuya Sakasai, for his valuable guidance and continuous support throughout this research. I am also grateful to Patrick Iglesias-Zemmour for bringing the pencil diffeology to my attention and for suggesting its connection with my work on non-smooth derivations, and to Toshiyuki Kobayashi for his generous support and encouragement as my supporting supervisor in the WINGS-FMSP program.

This work was supported by JSPS Research Fellowships for Young Scientists and KAKENHI Grant Number JP24KJ0881. I also acknowledge the WINGS-FMSP program for its support.

\bibliographystyle{plain}
\bibliography{pencil-diffeology}

\end{document}